\theoremstyle{plain}
\newtheorem{theorem}{Theorem}[section]
\newtheorem{corollary}[theorem]{Corollary}
\newtheorem{proposition}[theorem]{Proposition}
\theoremstyle{remark}
\newtheorem{remark}{Remark}
\theoremstyle{definition}
\newcommand{\tends}[1]{\xrightarrow[#1]{}}
\newcommand{\tendsd}{\xrightarrow{\ d\ }}
\newcommand{\tendsp}{\xrightarrow{\ P\ }}
\DeclareMathOperator{\Var}{D}
\DeclareMathOperator{\Prob}{P} 
\DeclareMathOperator{\Mean}{E}
\newcommand{\abs}[1]{\lvert #1\rvert}
\newcommand{\Nd}{\mathbb{N}}
\newcommand{\Rd}{\mathbb{R}}
\newcommand{\Cd}{\mathbb{C}}
\newcommand{\calF}{\mathcal{F}}
\newcommand{\e}{\mathrm{e}}
\newcommand{\imply}{\Rightarrow}
\begin{document}


\title{On asymptotic normality of certain linear rank statistics}

\author{
\name{V. Skorniakov\textsuperscript{a}$^{\ast}$\thanks{$^\ast$Corresponding author. Email: viktor.skorniakov@mif.vu.lt}}
\affil{\textsuperscript{a}Vilnius University, Faculty of Mathematics and Informatics, Naugarduko 24,
LT-03225, Vilnius, Lithuania}
}

\maketitle

\begin{abstract}
We consider asymptotic normality of linear rank statistics under various randomization rules met in clinical trials and designed for patients' allocation into treatment and placebo arms. Exposition relies on some general limit theorem due to \cite{McLeish74} which appears to be well suited for the problem considered and may be employed for other similar rules undiscussed in the paper. Examples of applications include well known results as well as several new ones.
\end{abstract}

\begin{keywords}
randomization rule; asymptotic normality; linear rank statistics
\end{keywords}

\begin{classcode}62G10; 62G20; 62P10\end{classcode}

\section{Introduction}

In order to  adequately measure an effect of treatment it is common a practice in clinical trial to randomize patients into those receiving tested treatment and those receiving placebo or standard therapy. To achieve the goals of the study different randomization rules may be applied. In what follows we consider the ones randomizing into two groups sequentially and intended to produce treatment and placebo groups of approximately equal sizes.  Each such rule may be described as follows. Let $n$ denotes a total number of patients to be randomized\footnote{for the sake of convenience we assume that $n$ is even} whereas $T_{n,j}$ takes value 1 in case $j$-th patient was assigned to receive investigated therapy and value $-1$ provided it was on the contrary. Then the rule is defined by conditional probabilities \[
\Prob(T_{n,j}=t_j \mid T_{n,j-1}=t_{j-1},\dots,T_{n,1}=t_1),
\]
according to which actual randomization takes place in practice. Several popular rules considered in the sequel are given in table \ref{tbl:rules}.

\begin{table}
\tbl{Several popular randomization rules.}
{\begin{tabular}[l]{@{}lc}\toprule
  \textbf{Common name of the rule} &  $\mathbf{\Prob(T_{n,j} = 1 \mid T_{n,j-1} = t_{j-1}, \dots,T_{n,1} = t_1)),j >1} $$^{\rm a,b}$
          \\
\colrule
    Complete randomization & $1/2$  \\ \hline
    Random allocation & $\max\left(0, \frac{\frac{n}{2}-S^{(1)}_{n,j-1}}{n-(j-1)}\right)$   \\ \hline
      & $1/2$, if $\max(S^{(-1)}_{n,j-1},S^{(1)}_{n,j-1})<\frac{n}{2}$;\\
     Truncated binomial design & $1$, if $S^{(-1)}_{n,j-1} = \frac{n}{2}$;\\ 
      & $0$, if $S^{(1)}_{n,j-1} = \frac{n}{2}$.\\ \hline
     & $1/2$, if $S^{(1)}_{n,j-1} = S^{(-1)}_{n,j-1}$;\\
     Wei's urn design$^{\rm c}$ $U(\alpha,\beta)$ &  $\frac{\alpha + \beta S^{(-1)}_{n,j-1}}{2 \alpha + \beta(j-1)}$   \\ 
\botrule
\end{tabular}}
\tabnote{$^{\rm a}$ in all cases $\Prob(T_{n,1}=\pm)=1/2$, $S^{(1)}_{n,j}=\sum\limits_{1\leq k\leq j:t_{k}=1}t_k=$'size of the treatment group after occurrence of $j$ assignments', $S^{(-1)}_{n,j}=j-S^{(1)}_{n,j}$}
\tabnote{$^{\rm c}$ $\alpha,\beta\in \Nd_0$ are some fixed known constants defining the rule}
\label{tbl:rules}
\end{table}

Let $Y_j$ denotes an outcome of $j$-th patient measured on continuous scale. One can apply different sample models upon which an inference is built and conclusion about the presence or absence of the treatment effect is made. The linear rank statistics (see \cite{Lachin02}, \cite{Lachin16}) is one of possible choices. To construct statistics of this type one should proceed as follows:
\begin{itemize}
\item given realization $y_1,\dots,y_n$ of $Y_1,\dots,Y_n$ associate with each $y_j$ the score\footnote{one of possible and frequent choices is to take $a_{n,j}$ equal to a simple rank obtained after ranking $y_1,\dots,y_n$; other popular choices of scores are given in table \ref{tbl:scores}} $a_{n,j}$;
\item put 
\begin{equation}\label{e:Ln}
	L_n=\sum_{j=1}^n(a_{n,j}-\bar{a}_n)T_{n,j},\quad \bar{a}_n=\frac{1}{n}\sum_{j=1}^na_{n,j};
\end{equation}
\item consider $a_n=(a_{n,1},\dots,a_{n,n})^T$ as fixed and $T_n=(T_{n,1},\dots,T_{n,n})^T$ as random.
\end{itemize}
Then $L_n$ is a linear rank statistics.

For a fixed randomization rule the distribution of $L_n$ is easy to tabulate provided sample size $n$ is relatively small, however, for a big one asymptotic result may be a good alternative. In the present paper we discuss  conditions under which appropriately centered and scaled $L_n$ tends to standard normal variate for the rules listed in table \ref{tbl:rules}. The whole exposition grounds on some general theorem due to \cite{McLeish74}. It is restated in section \ref{s:GLimTh} with relevant comments. Section \ref{s:Ex} is devoted to the above mentioned examples illustrating an adoption of the result given in \cite{McLeish74} for the case of linear rank statistics \eqref{e:Ln}. We are inclined to think that one can proceed in a similar way when considering other rules similar to those listed in table \ref{tbl:rules}. Finally section \ref{s:proofs} contains proofs of several propositions stated in sections \ref{s:GLimTh} and \ref{s:Ex}.

\begin{table}
\tbl{Several frequent choices of scores.}
{\begin{tabular}[l]{@{}lc}\toprule
  \textbf{Common name of the scores} & Formula for$^{\rm a}$ $a_{n,j}$
          \\ \colrule
	Median scores & $a_{n,j}=\mathbf{1}_{\left\{\left(\frac{n+1}{2};\infty\right)\right\}}(r_{n,j}),j=1,\dots,n$\\ \hline
    Wilcoxon scores & $a_{n,j}=r_{n,j}$\\ \hline
    van der Waerden scores & $a_{n,j}=\Phi^{-1}\left(\frac{r_{n,j}}{n+1}\right)$, where $\Phi$  is a c.d.f. of 	$N(0;1)$ r.v.\\ \hline
    Savage scores & $a_{n,j}=\sum_{k=1}^{r_{n,j} }\frac{1}{n-k+1}-1$ \\
\botrule
\end{tabular}}
\tabnote{$^{\rm a}$ $r_{n,j}$ denote the simple ranks of $Y_j$ obtained after ranking the whole realization of the sample $Y_1,\dots,Y_n$}
\label{tbl:scores}
\end{table}

\begin{table}
\tbl{Expressions for conditional expectations.}
{\begin{tabular}[l]{@{}lc}\toprule
  \textbf{Name of the rule} &  $\mathbf{\Mean (T_{n,j} \mid T_{n,j-1}, \dots,T_{n,1}),j >1} $$^{\rm a,b}$
          \\
\colrule
    Complete randomization & $0$  \\ \hline
    Random allocation & $\frac{{n}-{2}S^{(1)}_{n,j-1}}{n-(j-1)}$   \\ \hline      
     Truncated binomial design & $\mathbf{1}_{\frac{n}{2}}(S^{(-1)}_{n,j-1})-\mathbf{1}_{\frac{n}{2}}(S^{(1)}_{n,j-1})$\\ \hline
     Wei's urn design$^{\rm c}$ $U(\alpha,\beta)$ &  $\frac{2\beta( S^{(-1)}_{n,j-1}-(j-1))}{2 \alpha + \beta(j-1)}$   \\ 
\botrule
\end{tabular}}
\tabnote{$^{\rm a}$ in all cases $\Mean T_{n,1}=0$, $S^{(1)}_{n,j}=\sum\limits_{1\leq k\leq j:T_{n,k}=1}T_{n,k}=$'size of treatment group after occurrence of $j$ assignments', $S^{(-1)}_{n,j}=j-S^{(1)}_{n,j}$}
\tabnote{$^{\rm c}$ $\alpha,\beta\in \Nd_0$ are some fixed known constants defining the rule}
\label{tbl:expectations}
\end{table}

\section{Auxiliary results}\label{s:GLimTh}

In his seminal paper of 1974 Don L. McLeish (see \cite{McLeish74}) proved the following theorem.
\begin{theorem}\label{t:McLeish74}
Let $(r_n)\subset \Nd$ be an increasing sequence and let $\{Z_{n,j}\mid j=1,\dots,r_n,n\in \Nd\}$ be a zero mean stochastic array. Put 
\begin{equation*}
\pi_n=\prod\limits_{j=1}^{r_n}(1+\mathrm{i}\lambda Z_{n,j}),\quad \lambda>0,\quad \mathrm{i}=\sqrt[]{-1},\quad S_n=\sum\limits_{j=1}^{r_n} Z_{n,j}.	
\end{equation*}
Assume the following:
\begin{itemize}
\item[(a)] $\forall \lambda >0\ \pi_n$ is uniformly integrable;
\item[(b)] $\forall \lambda >0\ \Mean\pi_n \tends{n\to\infty} 1$;
\item[(c)] $\sum\limits_{j=1}^{r_n} Z_{n,j}^2 \tendsp 1$;
\item[(d)] $\max\limits_{1\leq j\leq r_n} \abs{Z_{n,j}} \tendsp 0$.
\end{itemize}
Then $S_n \tendsd N(0;1)$.
\end{theorem}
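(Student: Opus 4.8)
The plan is to establish pointwise convergence of the characteristic functions and then appeal to L\'evy's continuity theorem. Because each $S_n$ is real valued, it is enough to prove $\Mean\e^{\mathrm{i}\lambda S_n}\to\e^{-\lambda^2/2}$ for every $\lambda>0$: the value $\lambda=0$ is trivial, and for $\lambda<0$ one has $\Mean\e^{\mathrm{i}\lambda S_n}=\overline{\Mean\e^{-\mathrm{i}\lambda S_n}}$, so convergence to the real, even limit $\e^{-\lambda^2/2}$ transfers automatically. This is precisely the range of $\lambda$ for which hypotheses (a) and (b) are assumed, so I would fix an arbitrary $\lambda>0$ and work with it throughout.

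The heart of the argument is an exact factorization of $\e^{\mathrm{i}\lambda S_n}$ through $\pi_n$. Writing $\rho(y)=\mathrm{i}y-\mathrm{Log}(1+\mathrm{i}y)$ with the principal branch, one has the identity $\e^{\mathrm{i}y}=(1+\mathrm{i}y)\e^{\rho(y)}$ for every real $y$, together with the expansion $\rho(y)=-y^2/2+O(\abs{y}^3)$ as $y\to 0$. Taking $y=\lambda Z_{n,j}$ and multiplying over $j$ gives $\e^{\mathrm{i}\lambda S_n}=\pi_n U_n$, where
\begin{equation*}
U_n=\exp\Bigl(-\frac{\lambda^2}{2}V_n+R_n\Bigr),\qquad V_n=\sum_{j=1}^{r_n}Z_{n,j}^2,\qquad \abs{R_n}\le C\lambda^3\sum_{j=1}^{r_n}\abs{Z_{n,j}}^3,
\end{equation*}
the bound on $R_n$ being valid once every $\abs{\lambda Z_{n,j}}$ is small. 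The observation I would exploit is that $\abs{U_n}=1/\abs{\pi_n}\le 1$, since $\abs{\e^{\mathrm{i}\lambda S_n}}=1$ while $\abs{\pi_n}=\prod_j(1+\lambda^2 Z_{n,j}^2)^{1/2}\ge 1$; thus the complicated factor $U_n$ is in fact a bounded multiplier.

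Next I would show $U_n\tendsp\e^{-\lambda^2/2}$. Condition (d) gives $\max_j\abs{\lambda Z_{n,j}}\tendsp 0$, so with probability tending to $1$ the expansion above applies and $\abs{R_n}\le C\lambda^3\bigl(\max_j\abs{Z_{n,j}}\bigr)V_n$; since $\max_j\abs{Z_{n,j}}\tendsp 0$ by (d) and $V_n\tendsp 1$ by (c), this yields $R_n\tendsp 0$ and hence $U_n\tendsp\e^{-\lambda^2/2}$. I would then write
\begin{equation*}
\Mean\e^{\mathrm{i}\lambda S_n}-\e^{-\lambda^2/2}=\Mean\bigl[\pi_n\bigl(U_n-\e^{-\lambda^2/2}\bigr)\bigr]+\e^{-\lambda^2/2}\bigl(\Mean\pi_n-1\bigr).
\end{equation*}
The second summand vanishes by (b). In the first, the factor $U_n-\e^{-\lambda^2/2}$ is bounded and tends to $0$ in probability, while $\pi_n$ is uniformly integrable by (a); a routine splitting on the events $\{\abs{U_n-\e^{-\lambda^2/2}}>\varepsilon\}$ and $\{\abs{\pi_n}>K\}$ then drives it to $0$. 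This completes the convergence of the characteristic functions and, with it, the theorem.

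The step I expect to be the main obstacle is the very last one. Conditions (c) and (d) only supply convergence of $U_n$ in probability, and promoting this to convergence of the expectation $\Mean[\pi_n U_n]$ is exactly where one needs both the uniform integrability (a), to control $\pi_n$ where it is large, and the normalization (b), to fix the limiting constant. The clean bound $\abs{U_n}\le 1$ is what reconciles the \emph{in probability} information with the \emph{in expectation} conclusion; identifying and using that boundedness is, to my mind, the decisive idea of McLeish's method.
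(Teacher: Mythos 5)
Your proposal is correct, and it is essentially McLeish's original argument, which the paper does not reproduce but simply imports by citation from \cite{McLeish74}: the exact factorization $\e^{\mathrm{i}\lambda S_n}=\pi_n U_n$, the key observation $\abs{U_n}=1/\abs{\pi_n}\le 1$, the use of (c) and (d) to get $U_n\tendsp \e^{-\lambda^2/2}$, and the splitting $\Mean[\pi_n(U_n-\e^{-\lambda^2/2})]+\e^{-\lambda^2/2}(\Mean\pi_n-1)$ handled via (a) and (b). Nothing is missing; the treatment of $\lambda<0$ by conjugation and the restriction of the cubic bound on $R_n$ to the high-probability event where $\max_j\abs{\lambda Z_{n,j}}$ is small are both handled correctly.
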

\noindent The stated theorem appears to be very well suited to handle the case of linear rank statistics given by \eqref{e:Ln} provided\footnote{it holds true for all the rules listed in table \ref{tbl:rules} since $\Prob(T_{n,j}=\pm 1)=1/2$; for corresponding derivations see \cite{Lachin02} or \cite{Lachin16}} $\Mean T_{n,j}=0,j=1,\dots,n$. To see this put
\begin{gather}
	s_{n,j}=\frac{a_{n,j}-\bar{a}_n}{\sqrt[]{\sum_{j=1}^n\left(a_{n,j}-\bar{a}_n\right)^2}},\nonumber \\
	S_n=\sum_{j=1}^n s_{n,j}T_{n,j}=\left[Z_{n,j}=s_{n,j}T_{n,j}\right]=\sum_{j=1}^{n}Z_{n,j}. \label{e:Sn}
\end{gather}
Then $\forall j \ \Mean Z_{n,j}=0$ and condition (c) holds trivially. (d) reads as
\begin{equation}\label{e:const1}
\max_j\abs{s_{n,j}}=\frac{\abs{a_{n,j}-\bar{a}_n}}{\sqrt[]{\sum_{j=1}^n\left(a_{n,j}-\bar{a}_n\right)^2}}\to 0
\end{equation}
and is a natural restriction in problems of this kind. Therefore we assume that it holds for all examples considered in section \ref{s:Ex}. For justification consider the simplest case of complete randomization: to prove asymptotic result given in example \ref{ss:completeRandAlloc} by making use of Lindeberg CLT one should necessary impose constraint \eqref{e:const1}. Next, note that (a) also holds. Indeed, fix $\lambda>0$. Then by mean value theorem,
\begin{multline*}
	\ln\abs{\pi_n}^2=\sum_{j=1}^n \ln\left(1+\lambda^2 Z_{n,j}^2\right)=
    \sum_{j=1}^n \ln\left(1+\lambda^2 Z_{n,j}^2\right)-\ln 1=\\
    \sum_{j=1}^n \frac{\lambda^2 Z_{n,j}^2}{1+\theta_{n,j}\lambda^2 Z_{n,j}^2}\leq
    \lambda^2\sum_{j=1}^n {Z_{n,j}^2}=\lambda^2,
\end{multline*}
for some $\theta_{n,j}\in (0;1)$. Summing up, under constraint \eqref{e:const1}, (b) is the only condition one needs to check for $S_n$ given by \eqref{e:Sn} to satisfy $S_n \tendsd N(0;1)$.

For certain rules, however, it is more convenient to make use of the following result stemming from theorem \ref{t:McLeish74}.

\begin{theorem}[\cite{Davidson94}, Theorem 24.3]\label{t:McLeishCorr}
Let $\{(X_{n,j},\calF_{n,j})\mid j=1,\dots,r_n \uparrow \infty, r_n\in \Nd,n\ge 1 \}$ be a martingale difference array\footnote{that is, $(\calF_{n,j})$ is non-decreasing sequence of $\sigma$-algebras, $\forall n,j\ X_{n,j}$ is $\calF_{n,j}$ measurable and $\Mean(X_{n,j}\mid \calF_{n,j-1})=0$} with finite unconditional variances $\sigma_{n,j}^2$ such that $\sum_{j=1}^n \sigma^2_{n,j}=1$. If
\begin{itemize}
\item[(a)] $\sum_{j=1}^{r_n}X_{n,j}^2 \tendsp 1$ and
\item[(b)] $\max_{1\leq j \leq r_n}\abs{X_{n,j}}\tendsp 0$,
\end{itemize}
then $\sum_{j=1}^{r_n}X_{n,j} \tendsd N(0;1)$.
\end{theorem}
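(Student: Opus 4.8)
The plan is to apply Theorem~\ref{t:McLeish74} with the identification $Z_{n,j}=X_{n,j}$, $r_n\uparrow\infty$, and $\lambda>0$ arbitrary. With this choice the array is zero mean: since $\Mean(X_{n,j}\mid\calF_{n,j-1})=0$, the tower property yields $\Mean X_{n,j}=0$ for every $n,j$. Conditions (c) and (d) of Theorem~\ref{t:McLeish74} are then literally hypotheses (a) and (b) of the present statement. Hence only conditions (a) (uniform integrability of $\pi_n$) and (b) ($\Mean\pi_n\to1$) of Theorem~\ref{t:McLeish74} remain to be verified, and the martingale structure is exactly what supplies them.

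For condition (b) I would exploit the martingale difference property through the partial products $\pi_n^{(k)}=\prod_{j=1}^{k}(1+\mathrm{i}\lambda X_{n,j})$. Because $\pi_n^{(k-1)}$ is $\calF_{n,k-1}$ measurable and $\Mean(1+\mathrm{i}\lambda X_{n,k}\mid\calF_{n,k-1})=1$, conditioning on $\calF_{n,k-1}$ gives $\Mean(\pi_n^{(k)}\mid\calF_{n,k-1})=\pi_n^{(k-1)}$, so that $(\pi_n^{(k)})_{k}$ is a (complex) martingale with $\Mean\pi_n^{(k)}=\Mean\pi_n^{(0)}=1$. Peeling the factors off one at a time from $k=r_n$ down to $k=0$ therefore yields $\Mean\pi_n=1$ for every $n$, which is even stronger than the required convergence. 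The only thing to be careful about here is the integrability that legitimises the conditioning steps; this is guaranteed by the modulus estimate used in the next paragraph.

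The main obstacle is condition (a), the uniform integrability of $\pi_n$. The starting point is the deterministic bound $\abs{\pi_n}^2=\prod_{j=1}^{r_n}(1+\lambda^2X_{n,j}^2)\le\exp\bigl(\lambda^2\sum_{j=1}^{r_n}X_{n,j}^2\bigr)$, so that $\abs{\pi_n}\le\exp\bigl(\frac{\lambda^2}{2}\sum_jX_{n,j}^2\bigr)$ with $\sum_jX_{n,j}^2\tendsp1$ by hypothesis (a). In the analogue already treated in the text for \eqref{e:Sn} the sum of squares equalled $1$ identically, so $\abs{\pi_n}$ was bounded and uniform integrability was automatic; here $\sum_jX_{n,j}^2$ is only close to $1$ in probability, and the genuine difficulty is to control $\abs{\pi_n}$ on the rare event where this sum is large. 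The device I would use is truncation at a fixed level $C>1$: on $\{\sum_jX_{n,j}^2\le C\}$ the above bound gives $\abs{\pi_n}\le\e^{\lambda^2C/2}$ outright, so the contribution of this event to $\Mean(\abs{\pi_n};\abs{\pi_n}>K)$ vanishes once $K>\e^{\lambda^2C/2}$, while the complementary event has probability tending to $0$. To estimate the remaining mass $\Mean(\abs{\pi_n};\sum_jX_{n,j}^2>C)$ uniformly in $n$ one leans on the normalisation $\sum_j\sigma_{n,j}^2=1$, which forces the uniform moment bound $\Mean\max_jX_{n,j}^2\le\Mean\sum_jX_{n,j}^2=1$; combining this with hypothesis (b) and a stopping argument on the partial sums of squares (truncating the summands so as to keep the exponent bounded while retaining the martingale structure) shows that this tail contribution is small for large $C$, uniformly in $n$.

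Thus the crux of the argument is entirely in establishing condition (a); once the uniform integrability and $\Mean\pi_n\to1$ are in hand, Theorem~\ref{t:McLeish74} delivers $\sum_{j=1}^{r_n}X_{n,j}\tendsd N(0;1)$ directly. I expect the truncation bookkeeping for the uniform integrability to be the only technically delicate step, all other verifications being immediate from the martingale difference hypothesis and the variance normalisation.
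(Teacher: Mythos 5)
The paper does not actually prove this statement; it is imported verbatim from Davidson (1994), Theorem 24.3, whose proof does exactly what you set out to do, namely deduce it from Theorem~\ref{t:McLeish74}. So your overall plan is the right one, but its execution has a genuine gap: you attempt to verify conditions (a) and (b) of Theorem~\ref{t:McLeish74} for the \emph{original} array $Z_{n,j}=X_{n,j}$, and under the hypotheses of the statement those conditions can simply fail. The only moment assumption available is finiteness of variances, so the product $\pi_n=\prod_j(1+\mathrm{i}\lambda X_{n,j})$ need not be integrable at all: the factors of a martingale difference array are dependent, and already three factors can produce a term $X_{n,1}X_{n,2}X_{n,3}$ with $\Mean\abs{X_{n,1}X_{n,2}X_{n,3}}=\infty$ (take $X_{n,1}$ symmetric with finite second but infinite third moment, $X_{n,2}=\abs{X_{n,1}}\epsilon_2$, $X_{n,3}=\abs{X_{n,1}}\epsilon_3$ with independent random signs, suitably scaled and embedded in an otherwise well-behaved array satisfying (a) and (b)). Consequently neither the uniform integrability of $\pi_n$ nor the identity $\Mean\pi_n=1$ can be established for the untruncated array: your ``peeling'' argument needs $\pi_n^{(k)}\in L^1$ to legitimise each conditioning step, your bound $\abs{\pi_n}\le\exp\bigl(\lambda^2\sum_jX_{n,j}^2/2\bigr)$ does not help because $\sum_jX_{n,j}^2$ has no exponential moments, and the tail mass $\Mean\bigl(\abs{\pi_n};\sum_jX_{n,j}^2>C\bigr)$ you propose to estimate can be infinite for every $n$ and $C$.

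The missing idea --- the actual content of Davidson's proof --- is to truncate the \emph{array} before forming the product, rather than to salvage the untruncated product. Define $Z_{n,j}=X_{n,j}\mathbf{1}_{\{\sum_{s<j}X_{n,s}^2\le 2\}}$. This truncation is predictable ($\calF_{n,j-1}$-measurable), so $\{Z_{n,j}\}$ is again a martingale difference array, and with $T$ denoting the first index at which the running sum of squares exceeds $2$ one gets
$\abs{\pi_n}^2=\prod_{j\le T}\bigl(1+\lambda^2X_{n,j}^2\bigr)\le\exp\bigl(\lambda^2\textstyle\sum_{j<T}X_{n,j}^2\bigr)\bigl(1+\lambda^2X_{n,T}^2\bigr)\le \e^{2\lambda^2}\bigl(1+\lambda^2\max_jX_{n,j}^2\bigr)$.
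Since $\Mean\max_jX_{n,j}^2\le\Mean\sum_jX_{n,j}^2=\sum_j\sigma_{n,j}^2=1$ (this is the one and only place the variance normalisation is needed), the truncated products are bounded in $L^2$, hence uniformly integrable, and the now-legitimate conditioning argument gives $\Mean\prod_j(1+\mathrm{i}\lambda Z_{n,j})=1$ exactly. Conditions (c) and (d) for $\{Z_{n,j}\}$, and the agreement $\sum_jZ_{n,j}=\sum_jX_{n,j}$, all hold on the event $\{\sum_jX_{n,j}^2\le 2\}$, whose probability tends to one by hypothesis (a); so Theorem~\ref{t:McLeish74} applies to the truncated array and the conclusion transfers to $\sum_jX_{n,j}$. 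Your final paragraph gestures at precisely this stopping device, but deploys it to control the tail of the untruncated $\pi_n$ --- which is the one step that cannot be made to work.
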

\noindent Retain the notions introduced and set\footnote{here and further on $\Var(X)$ denotes a variance of $X$} 
\begin{gather}
\calF_{n,j}=\sigma\left(\left\{T_{k,i}\mid i=1,\dots,k; k=1,\dots,n-1\right\} \cup\left\{T_{n,1},\dots,T_{n,j}\right\}\right),\label{e:Fnj} \\
\tilde{Z}_{n,j}=s_{n,j}\left(T_{n,j}-\Mean(T_{n,j} \mid \calF_{n,j-1})\right), \quad X_{n,j}=
\frac{\tilde{Z}_{n,j}}{\sqrt[]{\sum_{j=1}^n \Var\tilde{Z}_{n,j}}}. \nonumber
\end{gather}
Since considered randomization rules are intended to produce groups of approximately equal sizes, it is natural to expect that the rule from this class will pretty often have the property
\begin{multline*}
	0 \xleftarrow[n,j\to\infty]{\ P\ } \Mean(T_{n,j}\mid \calF_{n,{j-1}})=2\Prob(T_{n,j}=1 \mid T_{n,1},\dots,T_{n,j-1})-1 \Longleftrightarrow\\ 
    \Prob(T_{n,j}=1 \mid T_{n,1},\dots,T_{n,j-1}) 
     \xrightarrow[n,j\to\infty]{\ P\ }\frac{1}{2},
\end{multline*}
rigorously read by us as follows:
\begin{equation}\label{e:vanishingConditional}
	\forall \epsilon,\delta>0\ \exists n_{\epsilon,\delta}>0\ \forall n,j \geq n_{\epsilon,\delta}
    \Prob\left( \abs{\Mean(T_{n,j}\mid \calF_{n,j-1})}<\epsilon\right)\geq 1-\delta.
\end{equation}
Assume it holds. Then the following is true\footnote{for the proof see section \ref{s:proofs}}.
\begin{proposition}\label{prop:vanishingConditional}
Under \eqref{e:vanishingConditional} and \eqref{e:const1},
\begin{equation*}
	\sum_{j=1}^n s_{n,j}(T_{n,j}-\Mean(T_{n,j}\mid \calF_{n,j-1})) \tendsd N(0;1).
\end{equation*}
\end{proposition}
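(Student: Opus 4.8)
The plan is to apply Theorem~\ref{t:McLeishCorr} to the normalised array $X_{n,j}=\tilde{Z}_{n,j}/V_n$, where $V_n=\sqrt{\sum_{j=1}^n\Var\tilde{Z}_{n,j}}$, and then to strip off the normalisation by Slutsky's theorem. First I would record that $\{(X_{n,j},\calF_{n,j})\}$ is a martingale difference array: the scores are fixed, so each $s_{n,j}$ is a deterministic constant, $T_{n,j}$ is $\calF_{n,j}$-measurable and $\Mean(T_{n,j}\mid\calF_{n,j-1})$ is $\calF_{n,j-1}$-measurable, whence $X_{n,j}$ is $\calF_{n,j}$-measurable with $\Mean(X_{n,j}\mid\calF_{n,j-1})=0$. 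By construction $\sum_j\Var X_{n,j}=1$, as required.

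The cornerstone is the identity $\Var(T_{n,j}\mid\calF_{n,j-1})=1-(\Mean(T_{n,j}\mid\calF_{n,j-1}))^2$, valid because $T_{n,j}^2\equiv 1$. Writing $c_{n,j}=\Mean[(\Mean(T_{n,j}\mid\calF_{n,j-1}))^2]\in[0,1]$ and using $\sum_j s_{n,j}^2=1$, it gives $\Var\tilde{Z}_{n,j}=s_{n,j}^2(1-c_{n,j})$ and hence $V_n^2=1-\sum_j s_{n,j}^2 c_{n,j}$. The key lemma is therefore $\sum_j s_{n,j}^2 c_{n,j}\to 0$, and this is the one place where both hypotheses must be combined. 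Fixing $\epsilon,\delta>0$ and splitting the expectation defining $c_{n,j}$ over the event of \eqref{e:vanishingConditional} and its complement (on which the integrand is at most $1$, since $\abs{\Mean(T_{n,j}\mid\calF_{n,j-1})}\le 1$) yields $c_{n,j}\le\epsilon^2+\delta$ for all $j\ge n_{\epsilon,\delta}$. Splitting the sum at $n_{\epsilon,\delta}$, the tail is at most $(\epsilon^2+\delta)\sum_j s_{n,j}^2=\epsilon^2+\delta$, whereas the finite head is at most $(n_{\epsilon,\delta}-1)\max_j s_{n,j}^2\to 0$ by \eqref{e:const1}; letting $\epsilon,\delta\to 0$ gives the lemma and $V_n^2\to 1$ (deterministically).

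Conditions (a) and (b) of Theorem~\ref{t:McLeishCorr} then follow with little extra work. For (b), the crude bound $\abs{T_{n,j}-\Mean(T_{n,j}\mid\calF_{n,j-1})}\le 2$ gives $\max_j\abs{X_{n,j}}\le 2\max_j\abs{s_{n,j}}/V_n\to 0$ deterministically, hence in probability. For (a), I would decompose $\sum_j X_{n,j}^2=V_n^{-2}\sum_j s_{n,j}^2 W_{n,j}$, with $W_{n,j}=(T_{n,j}-\Mean(T_{n,j}\mid\calF_{n,j-1}))^2$, into its conditional-mean part and a martingale-difference remainder. The conditional-mean part is $\sum_j s_{n,j}^2\Mean(W_{n,j}\mid\calF_{n,j-1})=1-\sum_j s_{n,j}^2(\Mean(T_{n,j}\mid\calF_{n,j-1}))^2$, and the nonnegative correction has expectation $\sum_j s_{n,j}^2 c_{n,j}\to 0$, so by Markov's inequality it vanishes in probability. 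The remainder has zero mean and, by orthogonality of martingale differences together with $W_{n,j}\le 4$, variance at most $16\sum_j s_{n,j}^4\le 16\max_j s_{n,j}^2\to 0$, so it vanishes in $L^2$. Since $V_n^2\to 1$, we obtain $\sum_j X_{n,j}^2\tendsp 1$.

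With (a) and (b) verified, Theorem~\ref{t:McLeishCorr} yields $\sum_j X_{n,j}\tendsd N(0;1)$; as the statistic in question equals $V_n\sum_j X_{n,j}$ and $V_n\to 1$, Slutsky's theorem gives the assertion. I expect the main obstacle to be the key lemma $\sum_j s_{n,j}^2 c_{n,j}\to 0$ of the second paragraph, since it is the only step fusing the two hypotheses, and the head/tail split---controlling the ``small-$j$'' block through the uniform smallness \eqref{e:const1} and the ``large-$j$'' tail through the vanishing conditional means \eqref{e:vanishingConditional}---must be arranged so that both bounds hold uniformly in $n$.
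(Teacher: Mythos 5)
Your proof is correct, and its skeleton is the same as the paper's: apply Theorem \ref{t:McLeishCorr} to the normalised array $X_{n,j}=\tilde{Z}_{n,j}/V_n$, show $V_n^2\to 1$ by a head/tail split that uses \eqref{e:vanishingConditional} on the indices $j\ge n_{\epsilon,\delta}$ and \eqref{e:const1} on the finitely many remaining ones, verify (a) and (b), and finish with Slutsky (a step the paper leaves implicit). The difference is in the bookkeeping for condition (a). The paper works with the pointwise remainder $r_{n,j}=s_{n,j}^2\,\Mean(T_{n,j}\mid\calF_{n,j-1})\left(\Mean(T_{n,j}\mid\calF_{n,j-1})-2T_{n,j}\right)$, so that $\tilde{Z}_{n,j}^2=s_{n,j}^2+r_{n,j}$ and $\Var\tilde{Z}_{n,j}=s_{n,j}^2+\Mean r_{n,j}$ hold simultaneously; the event split of \eqref{e:vanishingConditional} gives $\Mean\abs{r_{n,j}}\le 6\epsilon s_{n,j}^2$ for large $n,j$, whence $r_n=\sum_j r_{n,j}\to 0$ in $L_1$. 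This single estimate then delivers both $V_n^2\to 1$ and condition (a) at once, since $\sum_j\tilde{Z}_{n,j}^2=1+r_n\tendsp 1$. You instead compute the variance exactly through $\Var(T_{n,j}\mid\calF_{n,j-1})=1-\left(\Mean(T_{n,j}\mid\calF_{n,j-1})\right)^2$, obtaining the clean deterministic statement $V_n^2=1-\sum_j s_{n,j}^2c_{n,j}\to 1$, but condition (a) then needs separate work: your conditional-mean/martingale-remainder decomposition, with Markov's inequality for the nonnegative correction and orthogonality together with $\sum_j s_{n,j}^4\le\max_j s_{n,j}^2$ for the $L^2$ bound on the remainder. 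Both routes are sound; the paper's $L^1$ bound on $r_n$ is more economical because it controls the cross term $T_{n,j}\Mean(T_{n,j}\mid\calF_{n,j-1})$ pointwise rather than through martingale orthogonality, while your version isolates more transparently where each hypothesis enters and makes explicit the martingale-difference verification and the final Slutsky step.
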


\noindent Given proposition is well suited for applications since the centering terms $\Mean(T_{n,j}\mid \calF_{n,j-1})=2\Prob(T_{n,j}=1 \mid T_{n,1},\dots,T_{n,j-1})-1$ involve only conditional probabilities defining the randomization rule and thus do not require any extra calculations or assumptions.

In case when $\Mean(T_{n,j} \mid \calF_{n,j-1})\not\to 0$, one can still apply theorem \ref{t:McLeishCorr} provided computation of the variance in denominator is easy. This time, however, conditions (a) and (b) need verification and in general may impose additional constraints on $\{a_{n,j}\}$ beyond that given by \eqref{e:const1} as it shown by example \ref{p:TBD}.

Finishing this section we summarize the constraints one needs to impose/verify and corresponding implications by making use of the notions introduced in this subsection. This will help when going through the proofs of propositions given in section \ref{s:Ex}. 

\subsection{Constraints}\label{ss:auxConstraints}
\begin{itemize}
\item[(c1)] $\max\limits_{1\leq j\leq n}\abs{s_{n,j}}=\frac{\abs{a_{n,j}-\bar{a}_n}}{\sqrt[]{\sum_{j=1}^n}\left(a_{n,j}-\bar{a}_n\right)^2}\tends{n\to\infty} 0$;
\item[(c2)] $\forall n,j\quad \Mean T_{n,j}=0$;
\item[(c3)] $\forall \lambda>0\quad \Mean \pi_{n}\tends{n\to\infty}1$ with $\pi_n=\prod_{j=1}^{n}(1+\mathrm{i}\lambda Z_{n,j})$ and $Z_{n,j}=s_{n,j}T_{n,j}$;
\item[(c4)] $\Mean(T_{n,j}\mid \calF_{n,j-1}) \xrightarrow[n,j\to\infty]{\ P\ }0$ with $\calF_{n,j}$ defined by \eqref{e:Fnj} and in the sense of \eqref{e:vanishingConditional}.
\end{itemize}

\subsection{Implications}\label{ss:auxImplications}
\begin{itemize}
\item[(i)] (c1), (c2), c(3) $\imply \sum_{j=1}^n Z_{n,j} \tendsd N(0;1)$;
\item[(ii)] (c1), (c4) $\imply \sum_{j=1}^n \tilde{Z}_{n,j} \tendsd N(0;1)$, with $\tilde{Z}_{n,j}=s_{n,j}\left(T_{n,j}-\Mean(T_{n,j} \mid \calF_{n,j-1})\right)$.
\end{itemize}

\subsection{Concluding remarks}\label{ss:auxConcludingRems}
\begin{itemize}
\item[(r1)] through the rest part of the paper we retain all notions introduced in this section including those given in tables; 
\item[(r2)] it was already mentioned that (c2) holds for all rules listed in table \ref{tbl:rules}; therefore for (i) to hold one only needs to verify (c3);
\item[(r3)] dealing with a particular rule we apply combination of constraints which seems most convenient and/or least restrictive for that particular rule;
\item[(r4)] note that all scores given in table \ref{tbl:scores} satisfy (c1) (see table\footnote{to fill the table one has to produce some simple but tedious calculation; we therefore omit this process} \ref{tbl:scoresA}).
\end{itemize}

\begin{table}
\tbl{Asymptotic properties of scoring rules$^{\rm a}$.}
{\begin{tabular}[l]{@{}lc}\toprule
  \textbf{Name of the scoring} & \textbf{Order of} 
  \\
 \textbf{rule} & $\mathbf{\max_{1\leq j\leq n} s^2_{n,j}}$ 
          \\ \colrule
	Median scores & $O\left(\frac{1}{n}\right)$ \\ \hline
    Wilcoxon scores & $O\left(\frac{1}{n}\right)$ \\ \hline
    van der Waerden scores$^{\rm b,c}$ & $O\left(\frac{\ln n}{n}\right)$ \\ \hline
    Savage scores & $O\left(\frac{\ln^2 n}{n}\right)$ \\
\botrule
\end{tabular}}
\tabnote{$^{\rm a}$ $r_{n,j}$ denote the simple ranks of $Y_j$ obtained after ranking the whole realization of the sample $Y_1,\dots,Y_n$}
\tabnote{$^{\rm b}$ $\Phi^{-1}$ denotes an inverse of the c.d.f. of the standard normal variate $N(0;1)$}
\tabnote{$^{\rm c}$ one may require to make use of asymptotic approximations for quantile of $N(0;1)$ provided in any standard reference similar to \cite{patel_handbook_1996}}
\label{tbl:scoresA}
\end{table}

\section{Examples}\label{s:Ex}

In this section we provide three examples devoted to illustrate three approaches of application of the general theorems of section \ref{s:GLimTh}.

\subsection{Complete randomization and random allocation rule}\label{ss:completeRandAlloc}

For the case of complete randomization and random allocation rules the following applies.

\begin{proposition}\label{prop:completeRandAlloc}
Let $T_{n,1},\dots,T_{n,n}$ be a randomization sample corresponding to complete randomization or random allocation rule. Assume (c1). Then 
\begin{equation}\label{e:simplestConv}
	\sum_{j=1}^n Z_{n,j} \tendsd N(0;1).
\end{equation}
\end{proposition}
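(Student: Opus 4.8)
The plan is to invoke implication (i) of Section~\ref{ss:auxImplications}: since (c1) is assumed and (c2) holds for both rules (see remark (r2) of Section~\ref{ss:auxConcludingRems}), the only thing left to establish is (c3), i.e.\ that for every fixed $\lambda>0$ one has $\Mean\pi_n\tends{n\to\infty}1$, where $\pi_n=\prod_{j=1}^n(1+\mathrm{i}\lambda s_{n,j}T_{n,j})$. I would treat the two rules separately, since the first is immediate while the second carries all the work.

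For complete randomization the $T_{n,j}$ are independent, so, the scores $s_{n,j}$ being deterministic,
\[
\Mean\pi_n=\prod_{j=1}^n\Mean\left(1+\mathrm{i}\lambda s_{n,j}T_{n,j}\right)=\prod_{j=1}^n\left(1+\mathrm{i}\lambda s_{n,j}\Mean T_{n,j}\right)=1
\]
for every $n$ by (c2); thus (c3) holds trivially and \eqref{e:simplestConv} follows.

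For the random allocation rule $(T_{n,1},\dots,T_{n,n})$ is a uniformly random arrangement of $\frac n2$ entries equal to $1$ and $\frac n2$ entries equal to $-1$, hence exchangeable. Expanding the product and taking expectations, exchangeability forces $\Mean\prod_{j\in A}T_{n,j}$ to depend on $A$ only through $k=\abs{A}$; denoting this common value by $m_k$ and writing $e_k$ for the $k$-th elementary symmetric polynomial of $s_{n,1},\dots,s_{n,n}$, I obtain
\[
\Mean\pi_n=\sum_{k=0}^n(\mathrm{i}\lambda)^k m_k e_k .
\]
Invariance of the arrangement under a global sign flip kills all odd $m_k$, while the even ones admit the representation $m_{2m}=(-1)^m\big(\int_{-\pi}^\pi\sin^{2m}t\,\cos^{n-2m}t\,\d t\big)\big/\big(\int_{-\pi}^\pi\cos^{n}t\,\d t\big)$, which is of order $n^{-m}$ and is obtained by realizing the signs as i.i.d.\ symmetric $\pm1$ conditioned on a zero sum. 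On the side of the scores, the power sums satisfy $\sum_j s_{n,j}=0$, $\sum_j s_{n,j}^2=1$ and $\sum_j\abs{s_{n,j}}^r\le(\max_j\abs{s_{n,j}})^{r-2}\to0$ for $r\ge3$ by (c1)/\eqref{e:const1}, so Newton's identities yield $e_{2m+1}\to0$ and $e_{2m}\to(-1)^m/(2^m m!)$, together with the crude bound $\abs{e_{2m}}\le n^{m}/(2m)!$. Combining the two estimates, each summand with $k=2m\ge2$ tends to $0$ while being dominated, uniformly in $n$, by a term of order $\lambda^{2m}/m!$; Tannery's theorem then gives $\Mean\pi_n\to1$, which is (c3).

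The main obstacle is exactly this last random-allocation step: unlike the independent case, $\Mean\pi_n\neq1$ for finite $n$, and the higher-order moments $m_{2m}$ are not individually negligible (for instance $m_n=(-1)^{n/2}$ is deterministic). What saves the argument is that precisely where the moments fail to be small the elementary symmetric functions $e_k(s_{n,1},\dots,s_{n,n})$ become tiny through (c1), and the delicate point is to package this trade-off into a single summable, $n$-independent majorant so that the passage to the limit inside the sum is legitimate.
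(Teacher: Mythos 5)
Your complete-randomization argument is identical to the paper's, so nothing to compare there. For the random allocation rule you take a genuinely different route, and yours is the one that survives scrutiny. The paper pairs each balanced sequence with its global sign flip to get $\Mean\pi_n=\Mean\Re\left(\prod_{j=1}^n(1+\mathrm{i}\lambda Z_{n,j})\right)$, expands the real part into even-order symmetric sums, and then asserts that every product $T_{n,j_1}\cdots T_{n,j_{2l}}$ over distinct indices takes the values $\pm1$ with equal probabilities, concluding $\Mean\pi_n=1$ exactly for every $n$. That assertion is false: the global flip leaves even-order products invariant (it only forces the \emph{odd} mixed moments to vanish), and exchangeability of the balanced arrangement gives, in your notation, $m_2=\Mean\left[T_{n,1}T_{n,2}\right]=-\frac{1}{n-1}\neq 0$, and more generally $m_{2m}=(-1)^m\binom{n/2}{m}/\binom{n}{2m}$ (the coefficient of $x^{2m}$ in $(1-x^2)^{n/2}$ divided by $\binom{n}{2m}$; this agrees with your conditioned-Rademacher integral). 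So $\Mean\pi_n\neq1$ at finite $n$ --- precisely your observation --- and only the limit statement $\Mean\pi_n\to1$ is available. Your expansion $\Mean\pi_n=\sum_k(\mathrm{i}\lambda)^k m_k e_k$, the sign-flip argument for odd $k$, the Newton-identity limits $e_{2m}\to(-1)^m/(2^m m!)$, $e_{2m+1}\to 0$, and the bound $\abs{e_k}\le \left(\sum_j\abs{s_{n,j}}\right)^k/k!\le n^{k/2}/k!$ are exactly the right ingredients for that; in effect your proof is not an alternative to the paper's but a repair of it.

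The one step you flag without carrying out --- the $n$-free summable majorant needed for Tannery's theorem --- is indeed the crux, because the constant hidden in $m_{2m}=O(n^{-m})$ grows like $(2m-1)!!$, so your two estimates cannot just be multiplied ``for fixed $m$.'' It does close, most cleanly from the binomial form of $m_{2m}$: a short computation gives
\[
\lambda^{2m}\abs{m_{2m}e_{2m}}\le\frac{\lambda^{2m}}{m!}\prod_{i=0}^{m-1}\frac{n}{2(n-2i-1)}.
\]
For $m\le n/4$ every factor in the product is at most $1$ (since then $4i+2\le n$), so those terms are dominated by $\lambda^{2m}/m!$, summable and independent of $n$. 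For $n/4<m\le n/2$, bound the first $\lfloor n/4\rfloor$ factors by $1$ and the remaining ones by $n/2$, and use $m!\ge\lfloor n/4\rfloor!\,(n/4)^{m-\lfloor n/4\rfloor}$; the total contribution of this block is then at most $\frac{n}{4}\max(1,2\lambda^2)^{n/2}/\lfloor n/4\rfloor!\to 0$. Splitting the sum at $m=n/4$, Tannery's theorem handles the first block and the second block vanishes, which yields (c3) and hence \eqref{e:simplestConv}. With this supplement your argument is complete and, unlike the printed proof of the random-allocation case, correct as stated.
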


The result given above is well known\footnote{for an alternative proof different from that of ours see \cite{Lachin02}} and included here only for the sake of demonstration of application of theorem \ref{t:McLeish74}.

\subsection{Wei's Urn design $U(\alpha,\beta)$}\label{p:WeisUrnDesign}

Let $S_{n,j}^{(k)}=\sum_{l=1}^{j}\mathbf{1}_{\{k\}}(T_{n,l}),k=\pm 1,j=1,\dots,n$. From table \ref{tbl:expectations} it follows that
\begin{equation*}
	\Mean(T_{n,j}\mid \calF_{n,j-1})=\frac{\beta( 2S^{(-1)}_{n,j-1}-(j-1))}{2 \alpha + \beta(j-1)}=
    \frac{\frac{S^{(-1)}_{n,j-1}}{j-1}-\frac{1}{2}}{\frac{1}{2}+ \frac{\alpha}{\beta(j-1)}} \tendsp 0,
\end{equation*}
since the law of large numbers applies to\footnote{see \cite{Wei86}} $S_{n,j}^{(k)}$ and $\Mean \mathbf{1}_{\{k\}}(T_{n,j})=\Prob(T_{n,j}=k)=1/2,k=\pm1,j=1,\dots,n$. Hence, under (c1) proposition \ref{prop:vanishingConditional} applies and we immediately obtain the proposition below.

\begin{proposition}\label{prop:WeisDesign}
Let $T_{n,1},\dots,T_{n,n}$ be a randomization sample corresponding to Wei's urn design. Then (c1) implies \eqref{e:simplestConv}.
\end{proposition}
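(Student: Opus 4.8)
The plan is to route through Proposition \ref{prop:vanishingConditional} (implication (ii)) rather than through (i), since verifying (c3), i.e.\ $\Mean\pi_n\to1$, for the dependent urn sequence is awkward, whereas the conditional means are available in closed form. First I would check (c4). From table \ref{tbl:expectations}, $\Mean(T_{n,j}\mid\calF_{n,j-1})=\bigl(\frac{S^{(-1)}_{n,j-1}}{j-1}-\frac12\bigr)\big/\bigl(\frac12+\frac{\alpha}{\beta(j-1)}\bigr)$, and the strong law of large numbers for the urn proportion $S^{(-1)}_{n,j-1}/(j-1)\to\frac12$ (\cite{Wei86}) forces this to tend to $0$. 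The clean point here is that the urn dynamics do not depend on $n$, so $\Mean(T_{n,j}\mid\calF_{n,j-1})$ is a function of $j$ alone; hence the double-indexed requirement \eqref{e:vanishingConditional} collapses to ordinary convergence as $j\to\infty$, and (c4) holds.

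With (c1) assumed and (c4) established, implication (ii) yields $\sum_{j=1}^n\tilde Z_{n,j}\tendsd N(0;1)$, where $\tilde Z_{n,j}=s_{n,j}\bigl(T_{n,j}-\Mean(T_{n,j}\mid\calF_{n,j-1})\bigr)$. Since the target \eqref{e:simplestConv} concerns the raw statistic $\sum_j Z_{n,j}=\sum_j s_{n,j}T_{n,j}$, I would write $\sum_j Z_{n,j}=\sum_j\tilde Z_{n,j}+R_n$ with the predictable remainder $R_n=\sum_{j=1}^n s_{n,j}\Mean(T_{n,j}\mid\calF_{n,j-1})$, and finish by Slutsky's lemma once $R_n\tendsp0$ is shown.

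Establishing $R_n\tendsp0$ is the main obstacle, and it is genuinely more than a corollary of the law of large numbers used for (c4). The naive bound $\abs{R_n}\le\max_j\abs{s_{n,j}}\sum_j\abs{\Mean(T_{n,j}\mid\calF_{n,j-1})}$ is too lossy, because the conditional means are of typical size $O(j^{-1/2})$, so their sum is of order $\sqrt n$, which need not be killed by $\max_j\abs{s_{n,j}}$ for scores with $\max_j s_{n,j}^2=O(1/n)$. Instead I would estimate $\Mean R_n^2=\sum_{l,m}s_{n,l}s_{n,m}\,\Mean\bigl(\Mean(T_{n,l}\mid\calF_{n,l-1})\Mean(T_{n,m}\mid\calF_{n,m-1})\bigr)$ using the identity $\Mean(T_{n,j}\mid\calF_{n,j-1})=-\beta\sum_{l<j}T_{n,l}\big/(2\alpha+\beta(j-1))$ and the second-moment behaviour of the urn imbalance ($\Mean(\sum_{l\le j}T_{n,l})^2=O(j)$, \cite{Wei86}). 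The delicate feature is that the diagonal of $\Mean R_n^2$ is itself only $O(1)$, so its vanishing rests on cancellation against the strictly negative off-diagonal correlations $\Mean(T_{n,l}T_{n,m})<0$ produced by the balancing mechanism; quantifying this cancellation (equivalently the decay $\Mean(T_{n,l}T_{n,m})=O(1/\max(l,m))$) and combining it with $\sum_j s_{n,j}^2=1$ and a truncation at a slowly growing cutoff to absorb the small-$j$ block via (c1) is the technical heart. Everything else is an immediate application of the machinery of section \ref{s:GLimTh}.
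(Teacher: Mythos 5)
Your first two steps coincide exactly with the paper's proof, and the paper stops there: it verifies (c4) from the closed form $\Mean(T_{n,j}\mid\calF_{n,j-1})=-\beta D_{j-1}/(2\alpha+\beta(j-1))$, where $D_{j-1}=\sum_{l<j}T_{n,l}$ is the imbalance, via the law of large numbers of \cite{Wei86}, invokes Proposition \ref{prop:vanishingConditional}, and identifies the resulting limit with \eqref{e:simplestConv}. (Your remark that the urn dynamics are $n$-free, so that \eqref{e:vanishingConditional} collapses to single-index convergence, is correct.) The genuine gap is your third step, and it cannot be closed: under (c1) alone, $R_n=\sum_j s_{n,j}\Mean(T_{n,j}\mid\calF_{n,j-1})\tendsp 0$ is false. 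The off-diagonal quantities entering $\Mean R_n^2$ are $\Mean[e_le_m]$ with $e_j=-\beta D_{j-1}/(2\alpha+\beta(j-1))$, and these are \emph{positive}, not negative: for $U(0,\beta)$ one has $\Mean D_j^2\sim j/3$ and $\Mean(D_{m-1}\mid\calF_{l-1})=D_{l-1}(l-2)/(m-2)$, hence $\Mean[e_le_m]\approx l/(3m^2)>0$ for $l<m$. So no cancellation comes from the urn itself; whether $\Mean R_n^2$ vanishes is decided entirely by the sign pattern of the scores, which (c1) does not constrain. Concretely, take $s_{n,j}=-1/\sqrt n$ for $j\le n/2$ and $s_{n,j}=1/\sqrt n$ for $j>n/2$ (centered, unit norm, (c1) holds): then $\sum_{l<m}s_{n,l}s_{n,m}\,l/m^2\to 1/4$, giving $\Mean R_n^2\to 1/6$ and $\Var\bigl(\sum_j s_{n,j}T_{n,j}\bigr)\to 2/3\neq 1$; since $R_n$ is a linear functional of the urn imbalance path, which converges weakly to a Gaussian process, $R_n$ tends in distribution to a nondegenerate Gaussian, not to $0$. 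Two subsidiary claims of yours are also off: the diagonal of $\Mean R_n^2$ is $\approx\frac13\sum_j s_{n,j}^2/j$, which already tends to $0$ under (c1) alone (split the sum at $J=\lceil 1/\max_j s_{n,j}^2\rceil$), so it needs no rescue; and the negative correlations $\Mean[T_lT_m]<0$ you invoke are not the terms that appear in $\Mean R_n^2$.

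What your extra care actually exposes is a looseness in the paper rather than a missing provable step. Equation \eqref{e:simplestConv} is written for $Z_{n,j}=s_{n,j}T_{n,j}$, but what Proposition \ref{prop:vanishingConditional} delivers --- and all the paper proves --- is $\sum_j\tilde Z_{n,j}\tendsd N(0;1)$ for the \emph{centered} variables $\tilde Z_{n,j}=s_{n,j}\bigl(T_{n,j}-\Mean(T_{n,j}\mid\calF_{n,j-1})\bigr)$; the surrounding discussion (the centering terms ``do not require any extra calculations'', the comparison with the computationally heavier statistic of \cite{Wei83}) indicates the centered statistic is the intended object. Read literally with the raw sum, Proposition \ref{prop:WeisDesign} fails by the computation above --- which is precisely why \cite{Wei83} requires a different, more complicated variance norming. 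In short: the part of your proposal that matches the paper is correct; the Slutsky step you added is a genuine gap that no argument can fill, and the correct conclusion is the centered statement you had already reached.
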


An asymptotic linear rank test involving scores and based on randomization of this kind was investigated in \cite{Wei83} and \cite{Wei86}. The authors also made use of martingale theory. It is instructive to note a gain in the ease of proof provided by our approach as well as computational difficulty of statistic suggested in \cite{Wei83}. Empirical findings reported in \cite{Lachin02}, page 237, suggest that our statistic should perform more-or-less alike as that of \cite{Wei83}. However, we do not provide any simulational results to support this opinion since our purpose here lies only in demonstration of derivations.

\subsection{Truncated binomial design}\label{p:TBD}

This design, seeming pretty simple at first glance, represents an interesting case of restricted randomization rule\footnote{that is, when randomization is finished placebo and control arms contain equal numbers of patients} and deserves special attention. To derive conditions ensuring asymptotic normality we make direct use of theorem \ref{t:McLeishCorr} combined with stopping technique. Our main result is contained in the following proposition.

\begin{proposition}\label{prop:TBDSum}
Let $T_{n,1},\dots,T_{n,n}$ denote the randomization sample corresponding to truncated binomial design and let 
\begin{equation*}
	\tau_n=\tau=n-\min\left\lbrace j\in\{n/2,\dots,n-1\}\Bigg| \max\left(\sum_{k=1}^{j}\mathbf{1}_{\{1\}}(T_{n,k}),\sum_{k=1}^{j}\mathbf{1}_{\{-1\}}(T_{n,k})\right)=\frac{n}{2}\right\rbrace.
\end{equation*}
Assume (c1),
\begin{itemize}
\item[(i)] $\liminf_{n\to\infty}\left(\sum_{j=1}^{n/2} s_{n,j}^2+ \sum_{j=n/2+1}^{n} s_{n,j}^2\Prob(\tau \leq n-j)\right)>0$ and
\item[(ii)] $ \sum_{j=n/2+1}^{n-\tau} s_{n,j}^2\Prob(\tau>n-j)-\sum_{j=n-\tau+1}^{n} s_{n,j}^2 \tendsp 0$.
\end{itemize}
Then $\frac{\sum_{j=1}^{\tau} s_{n,j}T_{n,j}}{\sqrt{\sum_{j=1}^{n/2} s_{n,j}^2+ \sum_{j=n/2+1}^{n} s_{n,j}^2\Prob(\tau \leq n-j)}}\tendsd N(0;1)$.
\end{proposition}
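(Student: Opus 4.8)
The plan is to read the truncated binomial design as a genuine martingale that freezes once an arm fills up, and then to invoke Theorem~\ref{t:McLeishCorr} with the ``stopping'' reduction. First I would record that the first index at which $\max(S^{(1)}_{n,j},S^{(-1)}_{n,j})=n/2$ equals $n-\tau$, and that by the evenness of $n$ one always has $\tau\in\{1,\dots,n/2\}$, so $n-\tau\ge n/2$. The decisive observation is that the event $\{j\le n-\tau\}=\{\max(S^{(1)}_{n,j-1},S^{(-1)}_{n,j-1})<n/2\}$ is $\calF_{n,j-1}$-measurable, that on it $\Mean(T_{n,j}\mid\calF_{n,j-1})=0$ (a fair coin), and that on its complement the assignment is forced, so $T_{n,j}-\Mean(T_{n,j}\mid\calF_{n,j-1})=0$. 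Hence
\begin{equation*}
\tilde{Z}_{n,j}=s_{n,j}\bigl(T_{n,j}-\Mean(T_{n,j}\mid\calF_{n,j-1})\bigr)=s_{n,j}T_{n,j}\mathbf{1}_{\{j\le n-\tau\}},
\end{equation*}
so that $\sum_{j=1}^{n}\tilde{Z}_{n,j}=\sum_{j=1}^{n-\tau}s_{n,j}T_{n,j}$ is precisely the numerator of the target ratio: the deterministic tail contributes nothing to the martingale.

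Next I would check that $\{(\tilde{Z}_{n,j},\calF_{n,j})\}$ is a martingale difference array (immediate, since $\Mean(\tilde{Z}_{n,j}\mid\calF_{n,j-1})=0$) and compute its unconditional variances. As $T_{n,j}^2=1$ and $\mathbf{1}_{\{j\le n-\tau\}}$ is $\calF_{n,j-1}$-measurable,
\begin{equation*}
\Var\tilde{Z}_{n,j}=s_{n,j}^2\,\Prob(j\le n-\tau)=s_{n,j}^2\,\Prob(\tau\le n-j),
\end{equation*}
which equals $s_{n,j}^2$ for $j\le n/2$ (there $\tau\le n/2\le n-j$ surely). Therefore $V_n:=\sum_{j=1}^n\Var\tilde{Z}_{n,j}$ is exactly the square of the normalising constant in the denominator, and setting $X_{n,j}=\tilde{Z}_{n,j}/\sqrt{V_n}$ produces a martingale difference array with $\sum_{j=1}^n\Var X_{n,j}=1$. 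The claim then reduces to verifying hypotheses (a) and (b) of Theorem~\ref{t:McLeishCorr} for $X_{n,j}$.

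Condition (b) is immediate: $\max_j\abs{X_{n,j}}\le \max_j\abs{s_{n,j}}/\sqrt{V_n}\to 0$ deterministically, by (c1) together with assumption~(i), which says exactly that $\liminf_n V_n>0$. The substance is condition (a). Here $\sum_{j=1}^n X_{n,j}^2=V_n^{-1}\sum_{j=1}^{n-\tau}s_{n,j}^2$, and since $\Mean\sum_{j=1}^{n-\tau}s_{n,j}^2=\sum_j s_{n,j}^2\Prob(j\le n-\tau)=V_n$ with $V_n$ bounded away from $0$ and above by $1$, (a) is equivalent to the concentration statement $\sum_{j=1}^{n-\tau}s_{n,j}^2-V_n\tendsp 0$. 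Writing the random partial sum with indicators and cancelling the deterministic block $j\le n/2$, this difference equals
\begin{equation*}
\sum_{j=n/2+1}^{n}s_{n,j}^2\bigl(\Prob(\tau>n-j)-\mathbf{1}_{\{\tau>n-j\}}\bigr)
=\sum_{j=n/2+1}^{n}s_{n,j}^2\Prob(\tau>n-j)-\sum_{j=n-\tau+1}^{n}s_{n,j}^2,
\end{equation*}
which is the quantity that assumption~(ii) is engineered to drive to zero. Granting (ii), condition (a) follows, and Theorem~\ref{t:McLeishCorr} yields $\sum_{j=1}^n X_{n,j}\tendsd N(0;1)$, i.e.\ the assertion.

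I expect the main obstacle to be precisely this last reduction: one must pass from the random upper limit $n-\tau$ to deterministic summation ranges, recentre each indicator $\mathbf{1}_{\{\tau>n-j\}}$ against its probability $\Prob(\tau>n-j)$, and keep careful track of the boundary block near $j=n-\tau$ so that what remains matches assumption~(ii) exactly. Everything else — the martingale-difference property, the variance bookkeeping identifying $V_n$ with the denominator, and the verification of (b) — is routine once the stopping identity $\sum_{j=1}^n\tilde{Z}_{n,j}=\sum_{j=1}^{n-\tau}s_{n,j}T_{n,j}$ is in hand.
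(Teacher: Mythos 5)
Your proposal is correct and takes essentially the same route as the paper's own proof: the identical stopped martingale difference array $U_{n,j}=\mathbf{1}_{\{j\le n-\tau\}}s_{n,j}\bigl(T_{n,j}-\Mean(T_{n,j}\mid\calF_{n,j-1})\bigr)$, the same variance bookkeeping identifying the normalizer with $\sum_{j}\Var U_{n,j}$ (the paper uses the law of total variance conditioning on $\tau$, you use $T_{n,j}^2=1$ plus $\calF_{n,j-1}$-measurability of the indicator), and the same reduction of conditions (a) and (b) of Theorem~\ref{t:McLeishCorr} to (ii) and to (c1)+(i), respectively. The one imprecision you flag yourself --- your reduction of (a) yields $\sum_{j=n/2+1}^{n}s_{n,j}^2\Prob(\tau>n-j)-\sum_{j=n-\tau+1}^{n}s_{n,j}^2\tendsp 0$ with the first sum running to the deterministic index $n$, whereas (ii) truncates it at the random index $n-\tau$ --- is glossed in exactly the same way by the paper, which merely asserts that the rewritten (a) ``is equivalent to (ii)'' without supplying the short bridging argument (note the gap term is nonnegative, bounded, and its expectation is dominated by $\Mean\bigl|\sum_{j=n/2+1}^{n-\tau}s_{n,j}^2\Prob(\tau>n-j)-\sum_{j=n-\tau+1}^{n}s_{n,j}^2\bigr|$, so (ii) does close it).
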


\begin{remark}\label{rem:onTau}
By definition of design it turns out that once one of the groups has achieved its maximal capacity $\frac{n}{2}$, the rest assignments in the tail are all taken equal to that of unfilled group. Note that $\tau \in \{1,\dots,n/2\}$ denotes a r.v. equal to the size of such tail assignment. The distribution of $\tau$ is given by the set of equations (see \cite{Lachin02}, subsection 3.4) $\Prob(\tau = k)=\frac{1}{2^{n-k-1}}\binom{n-k-1}{n/2-1},k=1,\dots,\frac{n}{2}.  \qquad\blacktriangle$
\end{remark}

\noindent Though a distribution of $\tau$ is explicitly known, conditions given above seem unhandy. Therefore below we provide "ready to apply" simplification.

\begin{proposition}\label{prop:TBDSum2}
Consider the setting of proposition \ref{prop:TBDSum}. Then
\begin{itemize}
\item[(s1)] $\max_{n/2\leq j \leq n} s_{n,j}^2=o\left(\frac{1}{\sqrt[]{n}}\right)\implies$ (i);
\item[(s2)] $\max_{n/2\leq j \leq n} s_{n,j}^2=o\left(\frac{1}{\sqrt[]{n\ln n}}\right)\implies$ (ii).
\end{itemize}
\end{proposition}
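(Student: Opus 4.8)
The plan is to reduce both implications to a single tail estimate for the tail length $\tau$, namely a sub-Gaussian bound of the form
\[
\Prob(\tau > t) \le C\e^{-ct^2/n},\qquad t\ge 0,
\]
for some absolute constants $C,c>0$. Such a bound is available from the explicit distribution recorded in Remark \ref{rem:onTau}: writing $\Prob(\tau=k)=2^{-(n-k-1)}\binom{n-k-1}{n/2-1}$ and applying Stirling's formula (equivalently, a Chernoff estimate for the first-passage representation $\tau=n-\min\{j:\max(S^{(1)}_{n,j},S^{(-1)}_{n,j})=n/2\}$) shows that $\tau$ concentrates on scale $\sqrt n$ with Gaussian-type tails. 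Two consequences will be used throughout. First, since $\tau$ is a positive integer bounded by $n/2$,
\[
\sum_{m=0}^{n/2-1}\Prob(\tau>m)=\Mean\tau\le C\sum_{m\ge 0}\e^{-cm^2/n}=O(\sqrt n).
\]
Second, taking $t_n=\sqrt{n\ln n}$ gives $\Prob(\tau>t_n)\le Cn^{-c}\tends{n\to\infty}0$ (adjusting $c$ if needed).

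For (s1)$\imply$(i) I would first exploit the normalization $\sum_{j=1}^n s_{n,j}^2=1$. Using $\Prob(\tau\le n-j)=1-\Prob(\tau>n-j)$,
\[
\sum_{j=1}^{n/2}s_{n,j}^2+\sum_{j=n/2+1}^{n}s_{n,j}^2\Prob(\tau\le n-j)=1-\sum_{j=n/2+1}^{n}s_{n,j}^2\Prob(\tau>n-j),
\]
so (i) is equivalent to $\limsup_n\sum_{j=n/2+1}^{n}s_{n,j}^2\Prob(\tau>n-j)<1$. Bounding each $s_{n,j}^2$ by $\max_{n/2\le j\le n}s_{n,j}^2$ and substituting $m=n-j$,
\[
\sum_{j=n/2+1}^{n}s_{n,j}^2\Prob(\tau>n-j)\le \max_{n/2\le j\le n}s_{n,j}^2\sum_{m=0}^{n/2-1}\Prob(\tau>m)=O(\sqrt n)\max_{n/2\le j\le n}s_{n,j}^2,
\]
which is $o(1)$ under (s1). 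Hence the bracket tends to $1$ and (i) follows.

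For (s2)$\imply$(ii) I would split the random quantity as $R_n=A_n-B_n$, where $A_n=\sum_{j=n/2+1}^{n-\tau}s_{n,j}^2\Prob(\tau>n-j)$ and $B_n=\sum_{j=n-\tau+1}^{n}s_{n,j}^2$, and show each tends to $0$ in probability. The term $A_n$ is handled exactly as in (i): dropping the (nonnegative) truncated terms gives the pathwise bound $A_n\le \max_{n/2\le j\le n}s_{n,j}^2\sum_{m\ge0}\Prob(\tau>m)=O(\sqrt n)\max s_{n,j}^2=o(1)$, already under the weaker (s1). The term $B_n$ is the crux, since it sums a random number $\tau$ of scores, whence $B_n\le \tau\max_{n/2\le j\le n}s_{n,j}^2$. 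On the event $\{\tau\le t_n\}$ with $t_n=\sqrt{n\ln n}$ this yields $B_n\le t_n\max s_{n,j}^2=\sqrt{n\ln n}\cdot o(1/\sqrt{n\ln n})=o(1)$, while $\Prob(\tau>t_n)\to0$; therefore $B_n\tendsp 0$, and combining the two estimates gives $R_n\tendsp 0$.

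The only genuinely delicate step is the control of $B_n$, the randomly terminated tail sum, and it is precisely this step that forces the stronger scaling in (s2). The pathwise bound $B_n\le\tau\max s_{n,j}^2$ becomes usable only after replacing the random $\tau$ by a deterministic truncation level $t_n$ on a high-probability event; making $\Prob(\tau>t_n)\to0$ cheaply via the Gaussian tail dictates $t_n$ of order $\sqrt{n\ln n}$, and then $t_n\max s_{n,j}^2\to0$ requires exactly $\max_{n/2\le j\le n}s_{n,j}^2=o(1/\sqrt{n\ln n})$. Consequently, establishing the sub-Gaussian tail bound for $\tau$ from Remark \ref{rem:onTau} is the technical heart of the argument; everything else reduces to bookkeeping with the normalization $\sum_{j=1}^n s_{n,j}^2=1$ and the elementary identity $\sum_{m=0}^{n/2-1}\Prob(\tau>m)=\Mean\tau=O(\sqrt n)$.
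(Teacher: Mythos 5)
Your proof is correct, but it takes a genuinely different route from the paper's, and the comparison is instructive. For (s1) the paper never invokes the normalization identity: it lower-bounds $\Prob(\tau\le n-j)\geq \Phi(1)-\frac{1}{2}>0$ uniformly for $j\le n-\sqrt{n}$ (via the weak limit $\tau/\sqrt{n}\tendsd \abs{Z}$, its fact (d2)) and then uses (s1) only to show $\sum_{j>n-\sqrt{n}}s_{n,j}^2=o(1)$, obtaining $\liminf\geq\Phi(1)-\frac{1}{2}$; your rewriting of the expression as $1-\sum_{j>n/2}s_{n,j}^2\Prob(\tau>n-j)$ followed by the bound $\max_{n/2\le j\le n}s_{n,j}^2\cdot\Mean\tau=o(1/\sqrt{n})\,O(\sqrt{n})$ is cleaner and yields the stronger conclusion that the limit equals $1$. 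For (s2) the paper proves $L_1$ convergence of the bounded variable $\xi_n$: writing $m_n=\max_{n/2\le j\le n}s_{n,j}^2$, it bounds $\Mean\sum_{j=n-\tau+1}^{n}s_{n,j}^2\le m_n\Mean\tau\sim m_n\sqrt{n}$ and, after an interchange of summation, $\Mean\sum_{j=n/2+1}^{n-\tau}s_{n,j}^2\bar F_\tau(n-j)\le m_n\sum_{j}\bar F_\tau(j)F_\tau(j)$, which it estimates as $O(m_n\sqrt{n\ln n})$ via the Gaussian approximation of $F_\tau$; you instead prove convergence in probability, handling $A_n$ by a pathwise bound and $B_n$ by truncating $\tau$ at $\sqrt{n\ln n}$. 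Two remarks. First, the sub-Gaussian tail bound you leave as a sketch is indeed provable (Hoeffding's inequality applied to the first-passage representation: $\{\tau>t\}$ is the event that a $\mathrm{Bin}(n-t-1,1/2)$ variable deviates from its mean by at least $(t+1)/2$), but it is also unnecessary: the only two consequences you use are $\Mean\tau=O(\sqrt{n})$, which is exactly the paper's quoted fact (d1), and $\Prob(\tau>\sqrt{n\ln n})\to 0$, which follows from (d1) by Markov's inequality; so the step you call the ``technical heart'' can be dispensed with entirely. Second, the logarithmic factor enters the two proofs in complementary places --- the paper loses it on the $A_n$-type term, you lose it on the $B_n$-type term --- and splicing the better half of each argument together (your pathwise bound $A_n\le m_n\Mean\tau$ with the paper's expectation bound $\Mean B_n\le m_n\Mean\tau$ plus Markov, since $B_n\geq 0$) shows that condition (ii) already follows from the weaker (s1) scaling $m_n=o(1/\sqrt{n})$; in other words, your route, slightly optimized, would strengthen the proposition by removing the $\ln n$ from (s2).
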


\noindent A direct application of this combined with information given in table \ref{tbl:scoresA} leads to the following corollary.

\begin{corollary}\label{cor:TBDAllScores}
Proposition \ref{prop:TBDSum} applies to all arrays of scores given in table \ref{tbl:scores}. 
\end{corollary}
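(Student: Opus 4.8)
The plan is to reduce the whole statement to a single growth-rate comparison and then read off the conclusion from table~\ref{tbl:scoresA}. First I would invoke Proposition~\ref{prop:TBDSum2} to replace the unwieldy conditions (i) and (ii) of Proposition~\ref{prop:TBDSum} by the two sufficient bounds (s1) and (s2) on $\max_{n/2\le j\le n}s_{n,j}^2$. The key simplifying observation is that the hypothesis of (s2) is strictly more demanding than that of (s1): since $\sqrt{n}/\sqrt{n\ln n}=1/\sqrt{\ln n}\to 0$, we have $1/\sqrt{n\ln n}=o(1/\sqrt{n})$, so any array with $\max_{n/2\le j\le n}s_{n,j}^2=o(1/\sqrt{n\ln n})$ automatically satisfies $\max_{n/2\le j\le n}s_{n,j}^2=o(1/\sqrt{n})$ as well. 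Hence it suffices, for each of the four score arrays, to verify the single bound $\max_{n/2\le j\le n}s_{n,j}^2=o(1/\sqrt{n\ln n})$; both (s1) and (s2), and therefore (i) and (ii), then follow at once.

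Second, since $\max_{n/2\le j\le n}s_{n,j}^2\le\max_{1\le j\le n}s_{n,j}^2$, it is enough to control the global maximum, whose order is tabulated in table~\ref{tbl:scoresA}. I would then simply multiply each tabulated order by $\sqrt{n\ln n}$ and check that the product vanishes: for median and Wilcoxon scores, $O(1/n)\cdot\sqrt{n\ln n}=O(\sqrt{\ln n/n})\to0$; for van der Waerden scores, $O(\ln n/n)\cdot\sqrt{n\ln n}=O((\ln n)^{3/2}/\sqrt{n})\to0$; and for Savage scores, $O(\ln^2 n/n)\cdot\sqrt{n\ln n}=O((\ln n)^{5/2}/\sqrt{n})\to0$. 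In every case the required $o(1/\sqrt{n\ln n})$ bound holds.

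Finally, remark~(r4) already records that all four scoring rules satisfy (c1), the remaining hypothesis of Proposition~\ref{prop:TBDSum}. Combining (c1) with the freshly verified (s1) and (s2) and appealing to Proposition~\ref{prop:TBDSum2} yields (i) and (ii), so Proposition~\ref{prop:TBDSum} applies to each array, which is the claim. The genuine analytic content of this corollary lies entirely upstream --- in establishing the orders collected in table~\ref{tbl:scoresA} (the calculations the authors deliberately omit) and in the proof of the implications (s1), (s2) in Proposition~\ref{prop:TBDSum2}. The corollary itself is essentially bookkeeping; the only point deserving care is getting the direction of the comparison between $1/\sqrt{n}$ and $1/\sqrt{n\ln n}$ right, so that checking the single stronger bound genuinely covers both (s1) and (s2).
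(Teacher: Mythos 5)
Your proposal is correct and matches the paper's (implicit) argument: the paper proves the corollary precisely by applying Proposition~\ref{prop:TBDSum2} together with the orders of $\max_{1\le j\le n}s_{n,j}^2$ recorded in table~\ref{tbl:scoresA}, exactly the bookkeeping you carry out, with (c1) supplied by remark~(r4). Your extra observation that the hypothesis of (s2) subsumes that of (s1), so only the single bound $\max_{n/2\le j\le n}s_{n,j}^2=o\left(1/\sqrt{n\ln n}\right)$ needs checking, is a harmless streamlining rather than a different route.
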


The case of truncated binomial design was treated in \cite{Rosenberger03} and \cite{Rosenberger05}. In the latter paper the authors pointed out that obtained statistic exhibited better properties than that of \cite{Rosenberger03}. Inspection of the proofs shows that\footnote{in the original statement the norming denominator expressed in terms of $a_{n,j}-\bar{a}_n$ is a bit different, however, in the body of the proof of the main theorem the authors show its asymptotic equivalence to $\sqrt[]{\sum_{j=1}^n(a_{n,j}-\bar{a}_n)^2}$} in our notation main result given there reads as follows (\cite{Rosenberger05}, theorem 1).
\begin{theorem}\label{t:TBDmainOfRosenberger}
In addition to (c1) assume the following:
\begin{itemize}
\item[(i)] $\max_{n/2 \leq j\leq n}s_{n,j}^2=o\left(\frac{1}{\sqrt[]{n}}\right)$;
\item[(ii)] $\exists \delta_1>0,\delta_2\in(1/2;1)$ such that $\max\limits_{j \geq \frac{n}{2}-\delta_1 n^{\delta_2}}\left(\sum_{k=n/2+j}^ns_{n,k}\right)^2 =o\left(\frac{1}{{n}^{\delta_2-1/2}}\right)$.
\end{itemize}
Then $\sum_{j=1}^n s_{n,j}T_{n,j} \tendsd N(0;1)$.
\end{theorem}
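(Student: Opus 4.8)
The plan is to deduce the theorem from Proposition \ref{prop:TBDSum} by isolating the deterministic tail that the design produces. Write $a=n-\tau$ for the absorption time. While neither arm is full the assignments $T_{n,1},\dots,T_{n,a}$ are genuinely randomized (fair, with $\Mean(T_{n,j}\mid\calF_{n,j-1})=0$), whereas $T_{n,a+1},\dots,T_{n,n}$ are forced and all equal a common sign $\eta\in\{-1,1\}$ (that of the unfilled arm). Accordingly I split
\[
\sum_{j=1}^n s_{n,j}T_{n,j}=\sum_{j=1}^{n-\tau} s_{n,j}T_{n,j}+\eta\sum_{j=n-\tau+1}^{n} s_{n,j},
\]
the first term being the randomized (free-phase) sum controlled by Proposition \ref{prop:TBDSum}, the second the forced tail. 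The goal is to show the free-phase sum tends to $N(0;1)$ while the tail is $o_P(1)$; Slutsky then finishes the argument.

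For the free-phase sum I would verify the hypotheses of Proposition \ref{prop:TBDSum} from the present hypothesis (i). The martingale differences vanish on the forced phase, so the quadratic variation equals $\sum_{j=1}^{n-\tau}s_{n,j}^2=1-\sum_{j=n-\tau+1}^{n}s_{n,j}^2$, and the norming variance is $1-R_n$ with $R_n=\sum_{j=n/2+1}^{n}s_{n,j}^2\Prob(\tau>n-j)$. The single probabilistic input needed is the order of $\tau$: a local-CLT estimate of the explicit distribution in Remark \ref{rem:onTau} gives $\tau=O_P(\sqrt n)$, and moreover $\Prob(\tau>\delta_1 n^{\delta_2})\tends{n\to\infty}0$ for every $\delta_2>1/2$. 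Combined with hypothesis (i), i.e. $\max_{j>n/2}s_{n,j}^2=o(1/\sqrt n)$, this yields $\sum_{j=n-\tau+1}^{n}s_{n,j}^2\le \tau\,\max_{j>n/2}s_{n,j}^2=O_P(\sqrt n)\cdot o(1/\sqrt n)\tendsp 0$, so the quadratic variation tends to $1$; and since $\sum_{t\ge 0}\Prob(\tau>t)=\Mean\tau=O(\sqrt n)$, also $R_n\tends{n\to\infty}0$. Thus the norming variance tends to $1$, conditions (i)–(ii) of Proposition \ref{prop:TBDSum} hold, and the free-phase sum, normalised by $\sqrt{1-R_n}\to 1$, converges to $N(0;1)$.

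It remains to kill the tail. On the event $\{\tau\le\delta_1 n^{\delta_2}\}$, whose probability tends to $1$ by the bound above, the starting index $n-\tau+1$ exceeds $n-\delta_1 n^{\delta_2}$, so hypothesis (ii) applies and gives $\bigl(\sum_{j=n-\tau+1}^{n}s_{n,j}\bigr)^2=o\bigl(n^{-(\delta_2-1/2)}\bigr)\to 0$ because $\delta_2>1/2$. Hence the tail is $o_P(1)$, and by Slutsky $\sum_{j=1}^n s_{n,j}T_{n,j}\tendsd N(0;1)$.

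The main obstacle is the control of the random stopping time $\tau$, which enters both the summation limits and the weights in (i)–(ii): everything hinges on the two tail estimates $\tau=O_P(\sqrt n)$ and $\Prob(\tau>\delta_1 n^{\delta_2})\to 0$, extracted from the awkward closed form of Remark \ref{rem:onTau}, and on making them uniform enough to justify simultaneously the vanishing of $R_n$ and of the forced-tail sum. The remainder is the martingale central limit theorem applied to the stopped free phase, exactly as in Proposition \ref{prop:TBDSum}.
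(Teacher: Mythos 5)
Your proof is correct, but there is no proof in the paper to compare it with: Theorem \ref{t:TBDmainOfRosenberger} is merely quoted (translated into the paper's notation) from \cite{Rosenberger05}, and the author explicitly states that no attempt was made to connect its hypotheses with those of Proposition \ref{prop:TBDSum}, so that ``formally the question whether it is true remains open.'' Your argument is therefore a genuinely different route from the original source and, more interestingly, it settles the very question the paper leaves open, deriving the theorem entirely from the paper's own machinery. Three remarks on the substance. First, your verification of condition (ii) of Proposition \ref{prop:TBDSum} from hypothesis (i) alone is valid and is in fact sharper than the paper's sufficient condition (s2): writing $m_n=\max_{n/2\le j\le n}s_{n,j}^2$ and enlarging the random upper limit $n-\tau$ to $n$, the first term is dominated by the deterministic quantity $R_n=\sum_{j=n/2+1}^{n}s_{n,j}^2\Prob(\tau>n-j)\le m_n\sum_{k\ge 0}\Prob(\tau>k)=m_n\Mean\tau\sim m_n\sqrt{n}=o(1)$, which avoids the $\sqrt{n\ln n}$ factor the paper incurs in proving (s2) (there the sum $\sum_j\bar{F}_\tau(j)F_\tau(j)$ is bounded crudely instead of via $\bar{F}_\tau F_\tau\le\bar{F}_\tau$); hence the rate $o(1/\sqrt{n})$ suffices where the paper demands $o(1/\sqrt{n\ln n})$. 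Second, your identity for the norming constant, $\sum_{j=1}^{n/2}s_{n,j}^2+\sum_{j=n/2+1}^{n}s_{n,j}^2\Prob(\tau\le n-j)=1-R_n\to 1$, legitimately converts the normalized conclusion of Proposition \ref{prop:TBDSum} into convergence of the unnormalized stopped sum; you also correctly read the numerator $\sum_{j=1}^{\tau}$ in that proposition as the stopped sum $\sum_{j=1}^{n-\tau}$, which is what the paper's proof actually treats. Third, the forced tail is handled soundly: $\Prob(\tau>\delta_1 n^{\delta_2})\to 0$ follows from (d1) via Markov's inequality (or from (d2), since $\delta_1 n^{\delta_2-1/2}\to\infty$), and on the complementary event the starting index $n-\tau+1$ exceeds $n-\delta_1 n^{\delta_2}$, so hypothesis (ii) gives the deterministic bound $o(n^{-(\delta_2-1/2)})=o(1)$; Slutsky then yields the claim. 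The only external inputs are the facts (d1)--(d2) about $\tau$, on which the paper itself relies.
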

One can see that the above constraints put the main weight of $s_{n,1},\dots, s_{n,n}$ to the fore half $s_{n,1},\dots,s_{n,n/2}$ making the tail half $s_{n,n/2+1},\dots,s_{n,n}$ light enough. As a consequence, the authors show that the theorem does not apply to Savage scores. Corollary \ref{cor:TBDAllScores}, however, does not exclude Savage scores. Taking this into account as well as pretty handy conditions given in proposition \ref{prop:TBDSum2}  we may view results of this subsection as an improvement of both \cite{Rosenberger03} and \cite{Rosenberger05}. It is, however, honest dealing to note that we did not take any effort to show that conditions of theorem \ref{t:TBDmainOfRosenberger} imply the ones stated in proposition \ref{prop:TBDSum}. Hence, formally the question whether it is true remains open.

\section{Proofs}\label{s:proofs}

\begin{proof}[Proof of proposition \ref{prop:vanishingConditional}.]
Assume \eqref{e:vanishingConditional}. Fix $\epsilon>0$ and find corresponding $n_{\epsilon,\epsilon}$. Then for $n,j \geq n_{\epsilon,\epsilon}$,
\begin{multline*}
	\Var(\tilde{Z}_{n,j})=s_{n,j}^2\Mean\left(T_{n,j}-\Mean(T_{n,j}\mid \calF_{n,j-1})\right)^2=\\
    s_{n,j}^2\left(1-2\Mean\left( T_{n,j}\Mean(T_{n,j}\mid \calF_{n,j-1})\right)+\Mean\left(\Mean(T_{n,j}\mid \calF_{n,j-1})\right)^2\right)=\\
    \left[ r_{n,j}= 
    s_{n,j}^2\Mean(T_{n,j}\mid \calF_{n,j-1})\left(\Mean(T_{n,j}\mid \calF_{n,j-1})-2 T_{n,j}\right)\right]=
    s_{n,j}^2+\Mean r_{n,j},
\end{multline*}
with
\begin{equation*}
	\Mean\abs{r_{n,j}}=\Mean\boldmath{1}_{\left\lbrace
    \abs{\Mean(T_{n,j}\mid \calF_{n,j-1})}>\epsilon
    \right\rbrace}\abs{r_{n,j}}+\Mean\boldmath{1}_{\left\lbrace
    \abs{\Mean(T_{n,j}\mid \calF_{n,j-1})}\leq\epsilon
    \right\rbrace}\abs{r_{n,j}}\leq 6\epsilon s_{n,j}^2.
\end{equation*}
Therefore sum of variances may be written as given below:
\begin{multline*}
	\sum_{j=1}^{n}\Var\tilde{Z}_{n,j}=
    \sum_{j=1}^{n} \Var\left(\tilde{Z}_{n,j}\right)=\sum_{j=1}^n s_{n,j}^2+\Mean\sum_{j=1}^n r_{n,j}=
    \left[\sum_{j=1}^n r_{n,j}=r_n\right]=1+\Mean r_n.
\end{multline*}
By the above $r_n\to 0$ both in probability and in $L_1$, since
\begin{multline*}
	\Mean \abs{r_n}\leq \sum_{j=1}^n \Mean \abs{r_{n,j}} \leq 3\sum_{j=1}^{n_{\epsilon,\epsilon}-1} s_{n,j}^2 + 
    6\epsilon\sum_{j=n_{\epsilon,\epsilon}}^{n} s_{n,j}^2 
    \leq 3\sum_{j=1}^{n_{\epsilon,\epsilon}-1} s_{n,j}^2 + 
    6\epsilon \imply \\
    \limsup \Mean\abs{r_n}\leq 6\epsilon \stackrel{\epsilon \downarrow 0+0}{\Longrightarrow}\limsup \Mean\abs{r_n}=0.
\end{multline*}
Thus, $\sum_{j=1}^{n}\Var\tilde{Z}_{n,j} \to 1$ and 
\begin{align*}
	& \sum_{j=1}^n X_{n,j}^2 \stackrel{P}{\sim} \sum_{j=1}^{n}\tilde{Z}_{n,j}^2=1+r_n \tendsp 1;\\
    & \max_{1 \leq j \leq n} \abs{X_{n,j}}\leq \max_{1 \leq j \leq n} \abs{s_{n,j}}\frac{2}{\Var\left(\sum_{j=1}^{n}\tilde{Z}_{n,j}\right)} \sim 2\max_{1 \leq j \leq n} \abs{s_{n,j}}.
\end{align*}
Consequently, assumptions (a) and (b) of theorem \ref{t:McLeishCorr} hold provided \eqref{e:const1} holds.
\end{proof}

\begin{proof}[Proof of proposition \ref{prop:completeRandAlloc}.]

By remark (r3) of subsection \ref{ss:auxConcludingRems} for both rules it suffices to show that (c3) of subsection \ref{ss:auxConstraints} holds. We do this separately for each rule.

\smallskip\emph{Complete randomization.} $T_{n,j},j=1,\dots,n$, are i.i.d. Rademacher's r.v. Hence, $\forall \lambda >0 $,
\begin{equation*}
	\Mean \pi_n=\prod_{j=1}^n\Mean(1+\mathrm{i}\lambda Z_{n,j})=1.
\end{equation*}

\smallskip\emph{Random allocation rule.} First note that this rule produces $\binom{n}{\frac{n}{2}}$ equally likely permutations of $\frac{n}{2}$ of ones and $\frac{n}{2}$ of minus ones. Let $\mathrm{dom}((T_{n,1},\dots,T_{n,n}))= D_n \subset\{(k_1,\dots,k_n) \mid k_j\in \{0,1\}\}$ denotes that set. Then we can split it into two subsets $D_n^{+},D_n^{-}$ having equal numbers of elements and such that for each $(t_1,\dots,t_n)\in D_n^+$ there exists unique $(u_1,\dots,u_n)\in D_n^-$ having property $(u_1,\dots,u_n)=(-t_1,\dots,-t_n)$. Let $\lambda>0$ be fixed and $z_{t_1,\dots,t_n}=\prod_{j=1}^n\Mean(1+\mathrm{i}\lambda z_{j}t_j)$ for $z_1,\dots,z_n\in\Rd$ and $(t_1,\dots,t_n)\in D_n$. Denoting by $\bar{c}$ a conjugate and by $\Re c$ the real part of arbitrary $c\in\Cd$, the said then yields
\begin{multline*}
	\Mean\pi_n=\frac{1}{\binom{n}{\frac{n}{2}}}\left(\sum_{(t_1,\dots,t_n)\in D_n^+}z_{t_1,\dots,z_{t_n}}+
    \sum_{(t_1,\dots,t_n)\in D_n^-}z_{t_1,\dots,z_{t_n}}\right)=\\
    \frac{1}{\binom{n}{\frac{n}{2}}}\sum_{(t_1,\dots,t_n)\in D_n^+}\left(z_{t_1,\dots,z_{t_n}}+\bar{z}_{t_1,\dots,z_{t_n}}\right)=\frac{1}{\binom{n}{\frac{n}{2}}}\sum_{(t_1,\dots,t_n)\in D_n^+}2\Re z_{t_1,\dots,z_{t_n}}=\\2\Mean \mathbf{1}_{D_n^+}((T_{n,1},\dots,T_{n,n}))\Re \left(\prod_{j=1}^n(1+\mathrm{i}\lambda Z_{n,j})\right),
\end{multline*}
and by symmetry, $\Mean \pi_n=2\Mean \mathbf{1}_{D_n^-}((T_{n,1},\dots,T_{n,n}))\Re \left(\prod_{j=1}^n(1+\mathrm{i}\lambda Z_{n,j})\right)$. Adding the equalities one obtains an expression 
$\Mean \pi_n=\Mean \Re \left(\prod_{j=1}^n(1+\mathrm{i}\lambda Z_{n,j})\right)$.

Next, note that:
\begin{itemize}
\item for arbitrary $z_1,\dots,z_n\in\Rd$,
\begin{multline*}
	\Re \left(\prod_{j=1}^n(1+\mathrm{i}\lambda z_{j})\right)=1-\sum_{1\le j_1<j_2 \le n} z_{j_1}z_{j_2}+ 
    \sum_{1\le j_1<j_2<j_3<j_4 \le n} z_{j_1}z_{j_2}z_{j_3}z_{j_4}+\dots+\\
    (-1)^{(n-2)/2}
    \sum_{1\le j_1<\dots<j_{n-2} \le n} z_{j_1}\cdots z_{j_{n-2}}+(-1)^{n/2}\prod_{j=1}^n z_j;
\end{multline*}
\item for arbitrary $1<j_1<\dots<j_{2l}\leq n$ function $(T_{n,1},\dots,T_{n,n})\mapsto \prod_{k=1}^{2l}T_{{n,j_k}}$ attains values $\pm 1$ with equal probabilities.
\end{itemize}
Therefore $\Mean \pi_n=1$, constraint (c3) holds and implication (i) applies to this rule too.
\end{proof}

\begin{proof}[Proof of proposition \ref{prop:TBDSum}.]
Define
\begin{equation}\label{e:TBD_Xt}
	U_{n,j}=\mathbf{1}_{\{j \leq n-\tau\}}s_{n,j}(T_{n,j}-\Mean(T_{n,j}\mid \calF_{n,j-1})) ,\quad X_{n,j}=\frac{U_{n,j}}{\sqrt[]{\sum_{j=1}^n\Var \left(U_{n,j}\right)}},j=1,\dots,n.
\end{equation}
Since $\mathbf{1}_{\{j \leq n-\tau\}}=\mathbf{1}_{\{\max(S_{n,j-1}^{(-1)},S_{n,j-1}^{(1)}) < \frac{n}{2}\}}$, $U_{n,j},X_{n,j}$ are $\calF_{n,j-1}$ measurable. Moreover, $\forall j \Mean X_{n,j}=\Mean U_{n,j}=0$ and $\sum_{j=1}^{n}\Var X_{n,j}=1$. Consequently, $\{X_{n,j} \mid j=1,\dots,n\geq 1\}$ is a martingale difference array to which theorem \ref{t:McLeishCorr} may be applied. Next, note that 
\begin{equation*}
	\frac{U_{n,j}}{s_{n,j}} \ \Big |\ \tau = k \sim 
    \begin{cases}
		&\text{Rademacher's r.v. for } j\leq n-k;\\
        &\text{degenerate r.v. equal to 0 for } j > n-k.
	\end{cases}
\end{equation*}
Thus the law of total variance yields, 
\begin{equation*}\label{e:TBDvariances}
	\Var(U_{n,j})=s_{n,j}^2\Mean\Var(U_{n,j}/s_{n,j}\mid \tau)=s_{n,j}^2\Mean\mathbf{1}_{\{j \leq n-\tau\}}=s_{n,j}^2\Prob(\tau \leq n-j),\ j=1,\dots,\frac{n}{2}.
\end{equation*}
Consequently, conditions (a) and (b) of theorem \ref{t:McLeishCorr} read as
\begin{align*}
	&\text{(a) }\frac{\sum_{j=1}^{n-\tau} s_{n,j}^2}{\sum_{j=1}^{n/2} s_{n,j}^2+ \sum_{j=n/2+1}^{n} s_{n,j}^2\Prob(\tau \leq n-j)}\tendsp 1,\\
	&\text{(b) }\max_{1\leq j \leq n} \frac{\abs{s_{n,j}}}{\sqrt[]{\sum_{j=1}^{n/2} s_{n,j}^2+ \sum_{j=n/2+1}^{n} s_{n,j}^2\Prob(\tau \leq n-j)}} \tends{\ n\to \infty} 0,
\end{align*}
since $U_{n,1},\dots,U_{n,n-\tau}$ are i.i.d. Rademacher's variates. For $n$ sufficiently large $\sum_{j=1}^{n/2} s_{n,j}^2+ \sum_{j=n/2+1}^{n} s_{n,j}^2\Prob(\tau \leq n-j)$ becomes uniformly bounded away from zero because of (i). (c1) then implies (b) whereas rewriting (a) as
\begin{equation*}
	\frac{\sum_{j=1}^{n-\tau} s_{n,j}^2}{\sum_{j=1}^{n/2} s_{n,j}^2+ \sum_{j=n/2+1}^{n} s_{n,j}^2\Prob(\tau \leq n-j)}- 1 \tendsp 0,
\end{equation*}
one sees that it is equivalent to (ii). Hence, theorem \ref{t:McLeishCorr} applies and leads to the claim.
\end{proof}

\begin{proof}[Proof of proposition \ref{prop:TBDSum2}]
To give the proofs we need facts about the distribution of $\tau$ listed below.
\begin{itemize}
\item[(d1)] $\Mean \tau=\frac{n}{2^n}\binom{n}{n/2}\stackrel{n \to\infty}{\sim} \sqrt[]{n}$ (\cite{Lachin02}, subsection 3.4);
\item[(d2)] $\frac{\tau}{\sqrt[]{n}}\tendsd \abs{Z}, $ with $Z\sim N(0;1)$ (\cite{Rosenberger03}, lemma 1).
\end{itemize}

\noindent\emph{Proof of (s1).} By (d2), $\Prob(\tau \leq n-j)\approx 2\Phi(\sqrt{n}-\frac{j}{\sqrt{n}})-1>\Phi(1)-\frac{1}{2}>0$ uniformly for $j\in \{n/2+1,\dots,n-\sqrt{n}\}$ provided $n$ is large enough. Thus,
\begin{multline*}
	\sum_{j=1}^{n/2} s_{n,j}^2+ \sum_{j=n/2+1}^{n} s_{n,j}^2\Prob(\tau \leq n-j)\stackrel{n\to\infty}{>}\left(\Phi(1)-\frac{1}{2}\right)\sum_{j=1}^{n-\sqrt{n}} s_{n,j}^2\stackrel{n\to\infty}{\sim}\\
    \left(\Phi(1)-\frac{1}{2}\right)\sum_{j=1}^{n} s_{n,j}^2=\Phi(1)-\frac{1}{2},
\end{multline*}
since $\sum_{j=n-\sqrt{n}+1}^{n} s_{n,j}^2=\sqrt{n}o\left(\frac{1}{\sqrt{n}}\right)=o(1)$.

\noindent\emph{Proof of (s2).} Let $\xi_n=\sum_{j=n/2+1}^{n-\tau} s_{n,j}^2\Prob(\tau>n-j)-\sum_{j=n-\tau+1}^{n} s_{n,j}^2.$ Since $\abs{\xi_n}\leq 1$ is bounded, its convergence in probability to 0 is equivalent to convergence in $L_q$ for any fixed $q>0$, i. e., $\xi_n\tendsp 0 \Longleftrightarrow \Mean\abs{\xi_n}^q\tends{n\to\infty}0$. Take $q=1$. Then,
\begin{multline}\label{e:TBDbound1}
\Mean\Bigg | \sum_{j=n/2+1}^{n-\tau} s_{n,j}^2\bar{F}_\tau(n-j)-\sum_{j=n-\tau+1}^{n} s_{n,j}^2 \Bigg |\leq\\
	\Mean \sum_{j=n/2+1}^{n-\tau} s_{n,j}^2\bar{F}_\tau(n-j)+\Mean\sum_{j=n-\tau+1}^{n} s_{n,j}^2. 
\end{multline}
Next, setting $\max_{n/2\leq j\leq n}s_{n,j}^2=m_n$,
\begin{equation}\label{e:TBDbound2}
	\Mean\sum_{j=n-\tau+1}^{n} s_{n,j}^2 =m_n
    \Mean\sum_{j=n-\tau+1}^{n} \frac{s_{n,j}^2}{m_n} \leq m_n \Mean \tau \stackrel{n\to\infty}{\sim} m_n \sqrt[]{n}
\end{equation}
and 
\begin{multline*}
	\Mean \sum_{j=n/2+1}^{n-\tau} s_{n,j}^2\bar{F}_\tau(n-j)\leq 
    m_n\Mean \left(\sum_{j=n/2}^{n-\tau} \bar{F}_\tau(n-j)\right)=\\m_n\Mean \left(\sum_{j=\tau}^{n/2} \bar{F}_\tau(j)\right)=
    m_n\sum_{k=1}^{n/2} \Prob(\tau=k)\left(\sum_{j=k}^{n/2} \bar{F}_\tau(j)\right)=\\
    m_n\sum_{j=1}^{n/2} \bar{F}_\tau(j)\left(\sum_{k=1}^{j} \Prob(\tau=k)\right)=
    m_n\sum_{j=1}^{n/2} \bar{F}_\tau(j)F_{\tau}(j).
\end{multline*}
Since the limiting distribution of $\frac{\tau}{\sqrt[]{n}}$ is continuous, convergence of the c.d.f. is uniform on the whole real line. Therefore denoting by $\Phi(x)$ the c.d.f. of $Z\sim N(0;1)$ and making use of the well known asymptotic relationship $1-\Phi(x)\sim\frac{1}{x}\e^{-\frac{x^2}{2}}$,
\begin{multline}\label{e:TBDbound3}
	m_n\sum_{j=1}^{n/2} \bar{F}_\tau(j)F_{\tau}(j) \stackrel{n\to\infty}{\sim}
    4m_n\sum_{j=1}^{n/2} \left(\Phi\left(\frac{j}{\sqrt[]{n}}\right)-\frac{1}{2}\right)\left(1-\Phi\left(\frac{j}{\sqrt[]{n}}\right)\right)\leq\\
    m_n\left(\sqrt{2n\ln n} + \sum_{j=\sqrt{2n\ln n}+1}^{n/2}\left(1-\Phi\left(\frac{j}{\sqrt[]{n}}\right)\right)\right)\stackrel{n\to\infty}{\sim} m_n\sqrt{2n\ln n}.
\end{multline}
Combination of \eqref{e:TBDbound1}--\eqref{e:TBDbound3} thus yields
\begin{equation*}
	\Mean\Bigg | \sum_{j=n/2+1}^{n-\tau} s_{n,j}^2\bar{F}_\tau(n-j)-\sum_{j=n-\tau+1}^{n} s_{n,j}^2 \Bigg |\leq const.\cdot m_n \sqrt[]{n\ln n}=o(1).
\end{equation*}
\end{proof}

\bibliographystyle{gNST}
\bibliography{references}

\end{document}